\newtheorem{thm}{Theorem}[section]
\newtheorem{cor}[thm]{Corollary}
\newtheorem{lem}[thm]{Lemma}
\newtheorem{prop}[thm]{Proposition}
\newtheorem{defn}[thm]{Definition}
\newtheorem{rem}[thm]{Remark}
\DeclareMathOperator{\diam}{diam}
\DeclareMathOperator{\diameff}{effdiam}
\newcommand{\spb}[1]{\smallskip}
\newcommand{\mpb}[1]{\medskip}
\newcommand{\bpb}[1]{\bigskip}
\renewcommand{\d}{\delta}
\newcommand{\g}{\gamma}
\begin{document}
\DeclareGraphicsExtensions{.jpg,.pdf,.mps,.png}
\title{Bounds on Gromov Hyperbolicity Constant} 
\author[Ver\'onica Hern\'andez]{Ver\'onica Hern\'andez$^{(1)}$}
\address{Departamento de Matem\'aticas, Universidad Carlos III de Madrid,
Avenida de la Universidad 30, 28911 Legan\'es, Madrid, Spain}
\email{vehernan@math.uc3m.es}

\author[Domingo Pestana]{Domingo Pestana}
\address{Departamento de Matem\'aticas, Universidad Carlos III de Madrid,
Avenida de la Universidad 30, 28911 Legan\'es, Madrid, Spain}
\email{dompes\@@math.uc3m.es}

\author[Jos\'e M. Rodr{\'\i}guez]{Jos\'e M. Rodr{\'\i}guez}
\address{Departamento de Matem\'aticas, Universidad Carlos III de Madrid,
Avenida de la Universidad 30, 28911 Legan\'es, Madrid, Spain}
\email{jomaro@math.uc3m.es}
\thanks{$^{(1)}$ Supported in part by a
grant from Ministerio de Econom{\'\i}a y Competititvidad (MTM 2013-46374-P), Spain.}

\date{}

\maketitle{}


\begin{abstract}
 If $X$ is a geodesic metric space and $x_{1},x_{2},x_{3} \in X$, a  geodesic triangle
 $T=\{x_{1},x_{2},x_{3}\}$ is the union of the three  geodesics $[x_{1}x_{2}]$, $[x_{2}x_{3}]$ and $[x_{3}x_{1}]$ in $X$. The space $X$ is $\delta$-hyperbolic in the Gromov sense if any side
 of $T$ is contained in a $\delta$-neighborhood of the union of the two
 other sides, for every geodesic triangle $T$ in $X$.
 If $X$ is hyperbolic, we denote by
 $\delta(X)$ the sharp hyperbolicity constant of $X$, i.e. $\delta(X) =\inf \{ \delta\geq 0:\hspace{0.3cm} X \hspace{0.2cm} \text{is} \hspace{0.2cm} \delta \text{-hyperbolic} \}.$  To compute the hyperbolicity constant is a very hard problem. Then it is natural to try to bound the hyperbolycity constant in terms of some parameters of the graph. Denote by
 $\mathcal{G}(n,m)$ the set of graphs $G$ with $n$  vertices and $m$ edges, and such that every  edge has length $1$. In this work we estimate  $A(n,m):=\min\{\delta(G)\mid G \in \mathcal{G}(n,m) \}$ and  $B(n,m):=\max\{\delta(G)\mid G \in \mathcal{G}(n,m) \}$. In particular, we obtain good bounds for $B(n,m)$, and 
 we compute the precise value of $A(n,m)$ for all values of $n$ and $m$. Besides, we apply these results to random graphs.

\end{abstract}

{\it Keywords:}  Gromov hyperbolicity,  hyperbolicity constant, finite graphs, geodesic.


\section{Introduction}



Gromov hyperbolicity was introduced by the Russian mathematician Mikhail Leonidovich Gromov  in the setting of geometric group theory
\cite{G1}, \cite{G3}, \cite{GH}, \cite{CDP}, but has played an increasing role in analysis on general metric
spaces \cite{BHK}, \cite{BS}, \cite{BB}, with applications to the Martin boundary, invariant metrics in several
complex variables \cite{BBonk} and extendability of Lipschitz mappings \cite{La}.\smallskip

The theory of Gromov hyperbolic spaces was used initially for the study of finitely generated groups, where it was demonstrated to have an enormous practical importance. This theory was applied principally to the study of automatic groups (see \cite{O}), which plays an important role in sciences of the computation. The concept of hyperbolicity appears also in discrete mathematics, algorithms
and networking. Another important application of these spaces is the secure transmission of information by internet. In particular, the hyperbolicity plays an important role in the spread of viruses through the network (see \cite {K21,K22}).
The hyperbolicity is also useful in the study of DNA data (see \cite{BHB1}).\smallskip

The study of mathematical properties of Gromov hyperbolic spaces and its applications is a topic of recent and increasing interest in graph theory; see, for instance, \cite{BRST,BHB1,CPeRS,CRSV,CDEHV,K50,
K21,K22,K24,K56,MRSV,MRSV2,PRT1,RSVV, T}.\smallskip

Last years several researchers have been interested in showing that metrics used in geometric function theory are Gromov hyperbolic. For instance, the Gehring-Osgood $j$-metric
is Gromov hyperbolic; and the Vuorinen $j$-metric is not Gromov hyperbolic except in the punctured space (see \cite{Ha}). The study of Gromov hyperbolicity of the quasihyperbolic and the Poincar\'e metrics is the subject of
\cite{
BB,BHK,
HPRT,
PRT1,PT,
T}. In particular, the equivalence of the hyperbolicity of Riemannian manifolds and the hyperbolicity
of a simple graph was proved in \cite{PRT1,T}, hence, it is useful to know hyperbolicity criteria for graphs.\smallskip

Now, let us introduce the concept of Gromov hyperbolicity and the main results concerning this theory. For detailed expositions about Gromov hyperbolicity, see e.g. \cite{ABCD}, \cite{GH}, \cite{CDP}  or \cite{Va}.\smallskip


If $X$ is a metric space we say that the curve $\g:[a,b]\longrightarrow 
X$ is a
\emph{geodesic} if we have $L(\g|_{[t,s]})=d(\g(t),\g(s))=|t-s|$ for 
every $s,t\in [a,b]$
(then $\gamma$ is equipped with an arc-length parametrization).
The metric space $X$ is said \emph{geodesic} if for every couple of 
points in
$X$ there exists a geodesic joining them; we denote by $[xy]$
any geodesic joining $x$ and $y$; this notation is ambiguous, since in 
general we do not have uniqueness of
geodesics, but it is very convenient.
Consequently, any geodesic metric space is connected.
If the metric space $X$ is
a graph, then the edge joining the vertices $u$ and $v$ will be denoted 
by $[u,v]$.\smallskip

In order to consider a graph $G$ as a geodesic metric space, identify 
(by an isometry)
any edge $[u,v]\in E(G)$ with the interval $[0,1]$ in the real line;
then the edge $[u,v]$ (considered as a graph with just one edge)
is isometric to the interval $[0,1]$.
Thus, the points in $G$ are the vertices and, also, the points in the 
interior
of any edge of $G$.
In this way, any graph $G$ has a natural distance defined on its points, 
induced by taking the shortest paths in $G$,
and we can see $G$ as a metric graph.
Throughout this paper, $G=(V,E)$ denotes a simple connected graph such 
that every edge has length $1$.
These properties guarantee that any graph is a geodesic metric space.
Note that to exclude multiple edges and loops is not an important loss 
of generality, since \cite[Theorems 8 and 10]{BRSV2} reduce the problem 
of computing
the hyperbolicity constant of graphs with multiple edges and/or loops to 
the study of simple graphs.\smallskip

If $X$ is a geodesic metric space and $J=\{J_1,J_2,\dots,J_n\}$
is a polygon with sides $J_j\subseteq X$, we say that $J$ is $\d$-{\it thin} if for
every $x\in J_i$ we have that $d(x,\cup_{j\neq i}J_{j})\le \d$.
In other words, a polygon is $\d$-thin if each of its sides is contained in the $\d$-neighborhood
of the union of the other sides.
We denote by $\d(J)$ the sharp thin constant of $J$, i.e.,
$\d(J):=\inf\{\d\ge 0| \, J \, \text{ is $\d$-thin}\,\}\,. $
If $x_1,x_2,x_3\in X$, a {\it geodesic triangle} $T=\{x_1,x_2,x_3\}$ is
the union of the three geodesics $[x_1x_2]$, $[x_2x_3]$ and
$[x_3x_1]$. The space $X$ is $\d$-\emph{hyperbolic} $($or satisfies
the {\it Rips condition} with constant $\d)$ if every geodesic
triangle in $X$ is $\d$-thin. We denote by $\d(X)$ the sharp
hyperbolicity constant of $X$, i.e., $\d(X):=\sup\{\d(T)| \, T \,
\text{ is a geodesic triangle in }\,X\,\}.$ We say that $X$ is
\emph{hyperbolic} if $X$ is $\d$-hyperbolic for some $\d \ge 0$. If
$X$ is hyperbolic, then $ \d(X)=\inf\{\d\ge 0| \, X \, \text{ is
$\d$-hyperbolic}\,\}.$\smallskip

The following are interesting examples of hyperbolic spaces.
Every bounded metric space $X$ is $(\diam X)$-hyperbolic.
The real line $\mathbb{R}$ is $0$-hyperbolic due to any point of a geodesic triangle in the real line belongs to two sides of the triangle simultaneously.
The Euclidean plane $\mathbb{R}^2$ is not hyperbolic,  since the midpoint of a side on a large equilateral triangle is far from all points in the other two sides.
A normed vector space $E$ is hyperbolic if and only if $\dim\ E=1$. 
Every simply connected complete Riemannian manifold with sectional curvature verifying $K\leq -k^2$, for some positive constant $k$, is hyperbolic (see, e.g., \cite[p.52]{GH}).
The graph $\Gamma$ of the routing infraestructure of the Internet is also empirically  shown to be hyperbolic (see \cite{Bar}).\smallskip

The main examples of hyperbolic graphs are trees.
In fact, the hyperbolicity constant of a geodesic metric space can be viewed as a measure of
how ``tree-like'' the space is, since those spaces $X$ with $\delta(X) = 0$ are precisely the metric trees.
This is an interesting subject since, in many applications, one finds that the borderline between tractable and intractable cases may be the tree-like degree of the structure to be dealt with
(see, e.g., \cite{CYY}).\smallskip

For a general graph deciding whether or not a space is hyperbolic seems an unabordable problem.
Therefore, it is interesting to study the hyperbolicity of particular classes of graphs.
The papers \cite{BRST,BHB1,CCCR,CRS2,CRSV,MRSV2,PeRSV,PRSV,R,Si,WZ}  study the hyperbolicity of, respectively, complement of graphs, chordal 
graphs, strong product graphs, corona and join of graphs, line graphs, Cartesian product graphs, cubic graphs, tessellation 
graphs, short graphs, median graphs and  $k$-chordal graphs. In \cite{CCCR,CRS2,MRSV2} the authors characterize the hyperbolic 
product graphs (for strong product, corona and join of graphs, and Cartesian product) in terms of properties of the factor graphs. In this work we study the hyperbolicity constant of the graphs with $n$  vertices and $m$ edges.\smallskip

Let $\mathcal{G}(n,m)$ be the set of graphs $G$ with $n$  vertices and $m$ edges, and such that every  edge has length $1$. If $m=n-1$, then every $ G \in \mathcal{G}(n,m) $ is a tree and $\delta(G) =0$. On the other hand, the complete graph $K_{n}$ belongs to $ \mathcal{G}(n,m)$ with $m=\displaystyle{n\choose 2}$. Thus we consider $n-1 \leq m\leq \displaystyle{n\choose 2}$.\smallskip

Let us define 
$$A(n,m):=\min\{\delta(G)\mid G \in \mathcal{G}(n,m) \},$$  
$$B(n,m):=\max\{\delta(G)\mid G \in \mathcal{G}(n,m) \}.$$ 

Our aim in this paper is to estimate  $A(n,m)$ and  $B(n,m)$. In particular, we obtain good bounds for $B(n,m)$, and  we compute the precise value of $A(n,m)$ for all values of $n$ and $m$.\smallskip

The structure of this paper is as follows. In the next section we consider some previous results regarding hyperbolicity.
In Section $3$ we prove an upper bound for $B(n,m)$ (see Theorem \ref{t:bound2}).  Also, we find a lower bound for $B(n,m)$ in Section $4$ (see Theorem \ref{lowerbound}). In Section 5 we give an estimation of the differece between the upper and the lower bounds of $B(n,m)$.  One of the main results of this work is Theorem  \ref{bound-final}, which gives the precise value of $A(n,m)$.
We conclude this paper with Section $7$, where we discuss the applications of our previous results to random graphs.\smallskip

\section{Upper Bound of $B(n,m)$}

First, our purpose is to find an upper bound for $B(n,m)$. In order to simplify this proof, we prove some technical lemmas. We begin by proving Lemma \ref{l:lemma 1}. In order to prove it, we will use Karush-Kuhn-Tucker necessary conditions for nonlinear optimization problems with inequality constraints.\smallskip

Let $X$ be a non-empty open set of $\mathbb{R}^n$ and $f, g_{j}\hspace{0.2cm}(j=1,\dots,k) $ functions of $X\subseteq \mathbb{R}^n$ in  $\mathbb{R}$. Consider the problem:
$$\quad \text{P:} \quad \min_{x\in V}  f(x), $$
with $ \quad V:=\{x\in X \mid g_{j}(x)\leq0 \hspace{0.2cm}(j=1,\dots,k)\}. $\smallskip

 Given $x^{*} \in V$, let $I(x^{*})$ be the set of subscripts $j$ for which $g_{j}(x^{*})=0$.
\begin{defn}
We say that a point  $x^{*}\in V$ is  \emph{regular}   if the vectors $\nabla g_{j}(x^{*}) \hspace{0.2cm}  (j\in I(x^{*}))$ are linearly independent.
\end{defn}
\begin{thm}
\label{kkt}
Let $x^{*}$ be a point in $V$.
Suppose that $f, g_{j} \hspace{0.2cm} (j\in I(x^{*}))  $ are continuously differentiable functions and $g_{j} \hspace{0.2cm} (j\notin I(x^{*})) $ are continuous functions at $x^{*}$. If $x^{*}$ is a regular point and a local minimum of $f$ in $V$, then there exist unique scalars $\mu_{j} \hspace{0.2cm} (j\in I(x^{*})) $ such that: 
$$ \nabla f(x^{*})+ \Sigma_{j\in I(x^{*})}\mu_{j}\nabla g_{j}(x^{*})=0, $$
$$ \mu_{j}\geq 0, \hspace{0.5cm} j\in I(x^{*}).$$
\end{thm}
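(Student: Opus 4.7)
The plan is to derive the KKT conditions from a theorem of the alternative (Farkas' Lemma) applied to the gradients of $f$ and of the active constraints at $x^{*}$. The argument proceeds in three steps: (i) show that local optimality at $x^{*}$ rules out any first-order feasible descent direction, that is, any $d\in\mathbb{R}^{n}$ with $\nabla f(x^{*})^{T}d<0$ and $\nabla g_{j}(x^{*})^{T}d\leq 0$ for every $j\in I(x^{*})$; (ii) convert this nonexistence, via Farkas' Lemma, into the existence of nonnegative multipliers $\mu_{j}$ satisfying $\nabla f(x^{*})+\sum_{j\in I(x^{*})}\mu_{j}\nabla g_{j}(x^{*})=0$; (iii) deduce uniqueness of the $\mu_{j}$ from the regularity hypothesis.

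For step (i), I would first rule out the stronger alternative by contradiction. If some $d\in\mathbb{R}^{n}$ satisfied $\nabla f(x^{*})^{T}d<0$ and $\nabla g_{j}(x^{*})^{T}d<0$ for every $j\in I(x^{*})$, then along the ray $x(t)=x^{*}+td$, a first-order Taylor expansion of $f$ and of the active $g_{j}$ at $x^{*}$ would give $f(x(t))<f(x^{*})$ and $g_{j}(x(t))<0$ for $j\in I(x^{*})$ and all sufficiently small $t>0$. For the inactive constraints, mere continuity of $g_{j}$ at $x^{*}$ together with $g_{j}(x^{*})<0$ is enough to yield $g_{j}(x(t))<0$ for small $t$, so $x(t)\in V$, contradicting the local minimality of $x^{*}$. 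To pass from this strict system to the mixed system with $\nabla g_{j}(x^{*})^{T}d\leq 0$, I would use the standard perturbation trick: replace $d$ by $d_{\varepsilon}=d+\varepsilon v$, where $v$ is constructed using the linear independence of $\{\nabla g_{j}(x^{*})\}_{j\in I(x^{*})}$ so that $\nabla g_{j}(x^{*})^{T}v<0$ for every active $j$, and let $\varepsilon\to 0^{+}$.

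For step (ii), the nonexistence of $d$ with $\nabla f(x^{*})^{T}d<0$ and $\nabla g_{j}(x^{*})^{T}d\leq 0$ is, by Farkas' Lemma, equivalent to the existence of nonnegative scalars $\mu_{j}$ with $\nabla f(x^{*})+\sum_{j\in I(x^{*})}\mu_{j}\nabla g_{j}(x^{*})=0$. For step (iii), if two collections $\{\mu_{j}\}$ and $\{\mu_{j}'\}$ both satisfied this identity, subtraction would yield
$$\sum_{j\in I(x^{*})}(\mu_{j}-\mu_{j}')\nabla g_{j}(x^{*})=0,$$
and linear independence of $\{\nabla g_{j}(x^{*})\}_{j\in I(x^{*})}$ forces $\mu_{j}=\mu_{j}'$.

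The main obstacle I expect is the careful accounting in step (i) required to exploit the asymmetric hypotheses of the theorem: the active constraints must be differentiable so that their linear approximations control the sign of $g_{j}(x(t))$ for small $t$, whereas the inactive ones need only be continuous because their strict negativity at $x^{*}$ persists in a neighborhood. Once this bookkeeping is handled and the strict-to-nonstrict perturbation is justified, Farkas' Lemma and a one-line linear-algebra argument complete the proof.
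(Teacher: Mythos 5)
Your proposal is essentially correct, but note that the paper itself does not prove this statement at all: Theorem \ref{kkt} is the classical Karush--Kuhn--Tucker necessary-conditions theorem, quoted as a known tool and used only in the proof of Lemma \ref{l:lemma 1}, so there is no in-paper argument to compare against. What you have written is the standard textbook derivation of KKT under the linear-independence constraint qualification: rule out strictly feasible descent directions by a first-order Taylor expansion of $f$ and of the active $g_{j}$ (using mere continuity plus strict negativity for the inactive ones), upgrade to the nonexistence of directions with $\nabla f(x^{*})^{T}d<0$ and $\nabla g_{j}(x^{*})^{T}d\leq 0$ via a perturbation $d+\varepsilon v$ where $v$ solves $\nabla g_{j}(x^{*})^{T}v=-1$ for the active $j$ (such $v$ exists precisely by regularity), apply Farkas' Lemma to get nonnegative multipliers, and get uniqueness from linear independence. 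All three steps are sound; the regularity hypothesis is genuinely used twice (for the perturbation vector $v$ and for uniqueness), which is as it should be. Two cosmetic points: the phrase ``let $\varepsilon\to 0^{+}$'' is slightly misleading --- you only need to fix one sufficiently small $\varepsilon>0$ so that $\nabla f(x^{*})^{T}(d+\varepsilon v)<0$ still holds, since $d+\varepsilon v$ then solves the already-excluded strict system; and you should mention that feasibility of $x^{*}+td$ for small $t$ also uses that $X$ is open, which is part of the paper's setup. With those details spelled out, your sketch is a complete and correct proof of the quoted theorem.
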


The above conditions can be written as:
$$ \nabla f(x^{*})+ 
\Sigma_{j=1}^{k}\mu_{j}\nabla g_{j}(x^{*})=0 ,$$
$$ \mu_{j} g_{j}(x^{*})=0,\hspace{0.5cm} j=1,\dots, k, $$
$$ \mu_{j} \geq 0, \hspace{0.5cm} j=1,\dots,k. $$

Consider $G \in \mathcal{G}(n,m)$. Fix $\text{diam}\,V(G)=r$ and choose $u,v\in V(G)$ such that $d(u,v)=r$. Let $k_{j}=\#\{ w\in V(G):\hspace{0.2cm}d(w,u)=j \}$ $(0\leq j \leq r) $. The number of edges that we must eliminate from the complete graph of $n$ vertices in order to obtain $G$ is at least
$$f_{r}(k_{1},k_{2},\dots,k_{r}):=\Sigma_{t=2}^{r}k_{t}\Sigma_{s=0}^{t-2}k_{s}. $$

In the next result we compute the minimum value of $f_{r}$ such that $\text{diam}\,V(G)=r$ with $k_{j}\geq2,\hspace{0.2cm} (0\leq j \leq r-1)$.

\begin{lem} \label{l:lemma 1}
Consider the following optimization problem:
$$ \Delta_{r}:=\min_{x\in W} f_{r}, \quad \text{with} \quad  f_{r}(k_{1},k_{2},\dots,k_{r}):=\Sigma_{t=2}^{r}k_{t}\Sigma_{s=0}^{t-2}k_{s},\hspace{0.5cm} 2 \leq r \leq \dfrac{n}{2},$$
$$ \quad \text{and W:=\hspace{0.2cm}} \{ k_{0}=1, \hspace{0.5cm} k_{j}\geq2,  \quad \text{if} \quad 1\leq j\leq r-1, \quad k_{r}\geq1, $$
$$ 1+k_{1}+k_{2}+\dots+k_{r}=n \}. $$
Then $\Delta_{2}=1$,  $\Delta_{3}=n-1$ and $\Delta_{r}= 2n(r-3)-2r^{2}+6r+5 $ for $ r\geq4$.
\end{lem}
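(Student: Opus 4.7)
The plan is to apply Theorem \ref{kkt} to the constrained minimization defining $\Delta_r$. All of the constraints defining $W$ (the inequalities $k_j\geq 2$ for $1\leq j\leq r-1$, $k_r\geq 1$, and the equality $1+k_1+\cdots+k_r=n$) are affine, so the gradients of the active inequalities are distinct coordinate vectors and every feasible point is regular; the equality constraint is handled by an unconstrained Lagrange multiplier $\lambda\in\RR$. Since $W$ is closed and bounded, $f_r$ attains its minimum on $W$ and the minimizer satisfies the KKT system.

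The base cases I would dispatch by hand. For $r=2$, $f_2=k_2\geq 1$, so $\Delta_2=1$, attained at $k_2=1,k_1=n-2$. For $r=3$, writing $f_3=k_2+k_3(1+k_1)$, exchanging one unit of mass from $k_1$ to $k_2$ (keeping the sum constant) changes $f_3$ by $k_3-1\geq 0$, so the minimum lies on the face $k_1=2$, where $f_3=n-3+2k_3$ is minimized at $k_3=1$, giving $\Delta_3=n-1$.

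For $r\geq 4$ I would compute
\begin{equation*}
\frac{\partial f_r}{\partial k_i}=\sum_{s=0}^{i-2}k_s+\sum_{t=i+2}^{r}k_t\ (i\geq 2),\qquad \frac{\partial f_r}{\partial k_1}=\sum_{t=3}^{r}k_t,
\end{equation*}
and read off from KKT that, at any minimizer, every variable strictly above its lower bound shares the common minimum value of $\partial f_r/\partial k_i$. A combinatorial case analysis on the active set—exploiting the fact that the partials depend only on cumulative sums of the $k_s$—reduces the candidate minimizers to $r$ vertices $V_1,\dots,V_r$ of $W$, where $V_j$ places all remaining mass on coordinate $j$: $k_j=n-2r+2$ for $j\leq r-1$ (respectively $k_r=n-2r+1$), and every other $k_i$ at its lower bound. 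Direct computation then gives
\begin{equation*}
f_r(V_1)=f_r(V_{r-1})=(2r-5)n-2r^2+4r+5,\quad f_r(V_r)=(2r-3)n-2r^2+5,
\end{equation*}
\begin{equation*}
f_r(V_j)=(2r-6)n-2r^2+6r+5 \qquad (2\leq j\leq r-2),
\end{equation*}
and the hypothesis $r\leq n/2$ (equivalently $n-2r\geq 0$) makes $f_r(V_2)$ the smallest of the three distinct values, yielding $\Delta_r=2n(r-3)-2r^2+6r+5$.

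The main obstacle will be the reduction from the KKT system to this short list of vertex configurations. Since $f_r$ is multiaffine in the $k_i$ (no $k_i^2$ term) but its Hessian on $W$ is indefinite, one cannot invoke a generic ``multilinear on a polytope implies extremum at a vertex'' principle. Instead, the argument has to use the explicit form of the partials: if two variables $k_i$ and $k_{i'}$ with $i<i'$ were both strictly above their lower bounds, then the KKT equality $\partial f_r/\partial k_i=\partial f_r/\partial k_{i'}$ would translate into a telescoping relation among the $k_s$ at intermediate indices that is incompatible with the lower bounds $k_s\geq 2$, leaving only the $V_j$'s as candidates and completing the proof.
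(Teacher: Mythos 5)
Your base cases, the list of vertices $V_1,\dots,V_r$, the values $f_r(V_j)$, and the final comparison using $n\geq 2r$ are all correct (and keeping $k_r$ as a variable and disposing of $V_r$ by direct comparison is a legitimate variation on the paper, which instead uses the multiplier $\mu_r>0$ to force $k_r=1$ first). The genuine gap is exactly at the step you flag as the main obstacle, and the fix you sketch does not work: it is false that two coordinates strictly above their lower bounds are incompatible with the KKT system. For adjacent indices $i'=i+1$ the stationarity condition $\partial f_r/\partial k_i=\partial f_r/\partial k_{i+1}$ reduces to $k_{i-1}=k_{i+2}$, which is perfectly consistent with the bounds. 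Concretely, for $r=5$ take $k_1=k_4=2$, $k_5=1$ and any $k_2,k_3>2$ with $k_2+k_3=n-6$: the free partials are $\partial f_5/\partial k_2=1+k_4+k_5=4=1+k_1+k_5=\partial f_5/\partial k_3$, while the partials at the bound variables, $\partial f_5/\partial k_1=k_3+3$, $\partial f_5/\partial k_4=3+k_2$, $\partial f_5/\partial k_5=3+k_2+k_3$, are all at least $4$; so this entire segment consists of KKT points that are not among your $V_j$. (They happen to be harmless, because $f_r$ is affine along adjacent exchange directions $e_i-e_{i+1}$ and the value on that segment equals $f_r(V_2)=\dots=f_r(V_{r-2})$, but your argument as written has no way of knowing this.) There is also a small slip in the regularity discussion: with the equality constraint kept explicitly, LICQ fails when every inequality is active, which happens precisely when $n=2r$ and $W$ is a single point; either treat that degenerate case separately (as the paper does) or appeal to the fact that KKT needs no constraint qualification when all constraints are affine.

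The missing ingredient is second-order, not first-order, and it is what the paper supplies: after eliminating one variable via the equality constraint, $\partial^2 f_r^4/\partial k_1^2=-2<0$, so along each segment on which only $k_1$ and $k_{r-1}$ vary the objective is strictly concave and the minimum sits at an endpoint ($k_1=2$ or $k_{r-1}=2$); iterating, using the symmetry $k_t\leftrightarrow k_{r-t}$, shows a minimizer has all $k_j=2$ except one $k_{j_0}=n-2r+2$. In your formulation the same repair reads: along any feasible direction $e_i-e_j$ with $|i-j|\geq 2$ one has $v^{T}Hv=-2$, so the restriction is strictly concave and an interior point of such a segment cannot be a minimizer, while along $e_i-e_{i\pm1}$ the restriction is affine and one may slide to an endpoint without changing the value; together these force the minimum value to be attained at a vertex of the simplex, after which your vertex comparison finishes the proof. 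Without some such concavity or exchange argument, the reduction from the KKT system to the list $V_1,\dots,V_r$ is unjustified.
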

\begin{proof}
If $r=2$, then $f_{2}(k_{1},k_{2})=k_{2}$, with $k_{2}\geq1$. Hence $\Delta_{2}=1$. \smallskip

Consider now $r\geq 3$. The set $W$ can be written as:
$$ W= \{ k_{0}=1, \hspace{0.5cm} g_{j}=-k_{j}+2\leq0, \quad \text{if} \quad 1\leq j \leq r-1 ,$$
$$ g_{r}=-k_{r}+1\leq0,\quad  h=1+k_{1}+k_{2}+\dots+k_{r}-n=0\}.$$

Note that if $ W \neq \emptyset $, then $n= 1+\Sigma_{t=1}^{r}k_{t}\geq 1+2(r-1)+1 $ and $ 2r\leq n$. Conversely, if $ 2r\leq n$, then $ W \neq \emptyset $. Hence, we are assuming $ 2r\leq n$.\smallskip

We eliminate a variable of our problem by solving $ k_ {r} $ in the equality restriction. Substituting the expression obtained in $ f_{r}$, the original problem is reduced to the following:
$$ \Delta_{r}= \min_{x\in W^{1}}  f_{r}^{1}, \quad \text{with} \quad f_{r}^{1}(k_{1},k_{2},\dots,k_{r-1}) :=\Sigma_{t=2}^{r-1}k_{t}\Sigma_{s=0}^{t-2}k_{s} +$$ $$+ (n-\Sigma_{s=0}^{r-1}k_{s})\Sigma_{s=0}^{r-2}k_{s},$$
$$\quad \text{and } W^{1}:=\hspace{0.2cm} \{  k_{0}=1, \hspace{0.2cm} g_{j}=-k_{j}+2\leq0,  \quad \text{if} \quad 1\leq j \leq r-1, $$
$$ {g}_{r}=-k_{r}+1=2-n+\Sigma_{s=1}^{r-1}k_{s}\leq0\}. $$

Note that the vectors $\{\nabla g_{j}(x^{*}),\hspace{0.2cm} j=1,\dots,r \}$ are linearly dependent but become a linearly independent set by removing any of its elements. Therefore,  it suffices to consider that at least one of the coefficients $\mu_{j}$  is zero, so that the point is regular. \smallskip

Let us consider first the case in which $x^{*} $ is not a regular point  (then $ g_{j}(x^{*})=0$ for every $  1\leq j\leq r$). Hence:
 $$h=1+2(r-1)+1-n=0 \quad  \Rightarrow \quad  2r=n .$$
 
Therefore, $x^{*}=(2,\dots,2), $  $W^{1}=\{x^{*}\}$ and evaluating $f_{r}$ at $x=(x^{*},1)=(2,\dots,2,1)$ we get:
\begin{eqnarray*}
f_{r}(x)&=&\Sigma_{t=2}^{r-1}2(1+\Sigma_{s=1}^{t-2}2)+(1+\Sigma_{s=1}^{r-2}2)\\
&=& 2\Sigma_{t=2}^{r-1}(2t-3)+2r-3  \\
&=& (1+2r-5)(r-2)+2r-3  \\
&=& 2{r}^2 -6r+5,
\end{eqnarray*}
and then $\Delta_{r}= 2r^{2}-6r+5$. \smallskip

Now assume that the minimum point is regular, then $ g_{j} \neq 0$ for some $1\leq j \leq r $ and we can apply Theorem \ref{kkt}. Since: 
$$ \frac{\partial f_{r}^{1} }{\partial k_{r-1}}=\Sigma_{s=0}^{r-3}k_{s}-\Sigma_{s=0}^{r-2}k_{s}=-k_{r-2} ,$$
we conclude that  the following equality must be satisfied at a regular minimum point:
 $$ 
 \begin{pmatrix}
* \\
\vdots \\
* \\
-k_{r-2}
\end{pmatrix} +  
\mu_{1}\begin{pmatrix}
-1 \\
0 \\
\vdots\\
0
\end{pmatrix}+\dots+
\mu_{r-1}\begin{pmatrix}
0 \\
0\\
\vdots\\
-1
\end{pmatrix}+
\mu_{r}\begin{pmatrix}
1 \\
1 \\
\vdots \\
1 
\end{pmatrix}=
\begin{pmatrix}
0 \\
0 \\
\vdots \\
0 
\end{pmatrix}
$$
with $ \mu_{j}\geq 0 \hspace{0.2cm}$ for $j=1,\dots,r$.  \smallskip

Assuming that  $\mu_{r}=0$, from the previous expression we obtain that $-k_{r-2}=\mu_{r-1}$.
The restriction $g_{r-2}\leq0$ of the problem and the positivity of the coefficient $ \mu_ {r-1} $ implies that $-2\geq -k_{r-2}=\mu_{r-1}\geq0$  and this is a contradiction, therefore $\mu_{r}>0$.\smallskip

Considering the condition  $ \mu_{r} g_{r}(x^{*})=0 $ we deduce that ${g}_{r}=-k_{r}+1=0$ and $ k_{r}=1$.\smallskip

We write again the optimization problem, with $k_{r}=1$:
$$ \Delta_{r}= \min_{x\in W^{2}} f_{r}^{2}, \quad \text{with} \quad f_{r}^{2} (k_{1},k_{2},\dots,k_{r-1}):=\Sigma_{t=2}^{r-1}k_{t}\Sigma_{s=0}^{t-2}k_{s} + \Sigma_{s=0}^{r-2}k_{s}.$$
$$\quad \text{and } W^{2}:=\hspace{0.2cm} \{  k_{0}=1, \hspace{0.2cm} k_{j}\geq 2,  \quad \text{if} \quad 1\leq j \leq r-1, $$
$$ k_{1}+k_{2}+...+k_{r-1}=n-2 \}. $$
If $r=3$, then $f_{3}^{2}(k_{1},k_{2})=k_{2}+1+k_{1}$, with $k_{1},k_{2}\geq2$ and $k_{1}+k_{2}=n-2$. Hence, $\Delta_{3}=n-1$.\smallskip

Consider now $r\geq 4$. Note that:
\begin{eqnarray*}
f_{r}^{2}&=& k_{2}+ \Sigma_{t=3}^{r-1}k_{t}(1+\Sigma_{s=1}^{t-2}k_{s})+1+\Sigma_{s=1}^{r-2}k_{s}\\
&=& 1-k_{1}-k_{r-1} +2\Sigma_{t=1}^{r-1}k_{t}+\Sigma_{t=3}^{r-1}\Sigma_{s=1}^{t-2}k_{t}k_{s}\\
&=& 2n-3-k_{1}-k_{r-1} +\Sigma_{t-2\geq s}k_{t}k_{s}, \quad \text{with} \quad \Sigma_{t=1}^{r-1}k_{t}=n-2. 
\end{eqnarray*}

Consider now the expression $(\Sigma_{t=1}^{r-1} k_{t})^{2}$:
$$ (\Sigma_{t} k_{t})^{2}= \Sigma_{t} k_{t}^{2} +2\Sigma_{t-1\geq s}k_{t}k_{s}= \Sigma_{t} k_{t}^{2} +2\Sigma_{t-1=s}k_{t}k_{s}+2\Sigma_{t-2\geq s}k_{t}k_{s}.$$

Moreover, we can write:
\begin{eqnarray*}
\Sigma_{t-2\geq s}k_{t}k_{s}
&=& \frac{1}{2}(\Sigma_{t=1}^{r-1} k_{t})^{2}-\frac{1}{2}\Sigma_{t=1}^{r-1} k_{t}^{2}-\Sigma_{t=2}^{r-1} k_{t}k_{t-1}\\
&=& \frac{1}{2}(n-2)^{2}-\frac{1}{2}\Sigma_{t=1}^{r-1} k_{t}^{2}-\Sigma_{t=2}^{r-1} k_{t}k_{t-1}. 
\end{eqnarray*}

Thus we have deduced that $\Delta_{r}= \min_{x\in W^{3}} f_{r}^{3},$ with:
$$ f_{r}^{3}(k_{1},k_{2},\dots,k_{r-1}):=\frac{1}{2}n^{2}-1
-k_{1}-k_{r-1} -\frac{1}{2}\Sigma_{t=1}^{r-1} k_{t}^{2}  -\Sigma_{t=2}^{r-1}k_{t}k_{t-1},$$
and $W^{3}:= \{  k_{j}\geq 2 \quad \text{if} \quad 1\leq j \leq r-1,\hspace{0.2cm}  k_{1}+k_{2}+\dots+k_{r-1}=n-2 \}.$\smallskip

This formulation allows us to see  that the problem is symmetric in the variables $k_{t}$ and $k_{r-t}$ for every $1\leq t \leq r-1$.\smallskip

Substituting $ k_{r}=1$ and $  k_{r-1}=n-2-\Sigma_{t=1}^{r-2}k_{t}$ in $f_{r}$ we obtain $ \Delta_{r}= \min_{x\in W^{4}} f_{r}^{4}$, with:
 $$ f_{r}^{4}(k_{1},k_{2},\dots,k_{r-2}):=(n-2-\Sigma_{t=1}^{r-2}k_{t})\Sigma_{s=0}^{r-3}k_{s}+ \Sigma_{t=2}^{r-2}k_{t}\Sigma_{s=0}^{t-2}k_{s} +\Sigma_{s=0}^{r-2}k_{s}, $$
and $ W^{4}:=\hspace{0.2cm} \{  k_{j}\geq 2 \quad \text{if} \quad 1\leq j \leq r-2, \hspace{0.2cm} k_{r-1}=n-2-\Sigma_{t=1}^{r-2}k_{t}\geq 2 \}$.\smallskip

Then $k_{1}\in [2,n-4-\Sigma_{t=2}^{r-2}k_{t}] $. \smallskip

Computing the second derivative of $f^{4}_{r}$ with respect to  $k_{1}$ we get:
$$\frac{\partial^2 f^{4}_{r}}{\partial k_{1}^2} = -2<0.$$

That is, the function is convex and the minimum is reached at the endpoints of the interval,  $k_{1}=2$ or  $k_{1}=n-4-\Sigma_{t=2}^{r-2}k_{t}$, i.e., $k_{1}=2$ or  $k_{r-1}=2$.\smallskip

By iterating this argument one can check that if $x^{*}=(k_{1},k_{2},\dots,k_{r-1}) $ satisfies $f^{3}_{r}(x^{*})
= \Delta_{r}$, then $k_{j}=2$ except for one $ j_{0}$ with $ 1\leq j_{0} \leq r-1$, and $ k_{j_{0}}=n-2r+2$. By symmetry, the cases $j_{0}=1$ and $ j_{0}=r-1$ provide the same value; furthermore, the cases $1< j_{0}< r-1$ provide the same value.\smallskip

If $j_{0}=1$ or $j_{0}=r-1$, then
\begin{eqnarray*}
f_{r}^{3}(x^{*})
&=& \frac{1}{2}n^{2}-1-n+2r-2-2-\frac{1}{2} (n-2r+2)^{2}\\
&&-\frac{1}{2}4(r-2)-2(n-2r+2)-4(r-3)\\
&=& n(2r-5)-2r^{2}+4r+5.
\end{eqnarray*}

If  $ 1<j_{0}<r-1$, substituting  $x^{*}=(2,\dots,2, n-2r+2,2,\dots,2)$ in $ f_{r}^{3}$ we get
\begin{eqnarray*}
f_{r}^{3}(x^{*})
&=& \frac{1}{2}n^{2}-5-\frac{1}{2} (n-2r+2)^{2}-\frac{1}{2}4(r-2)-4(n-2r+2)-4(r-4)\\
&=& 2n(r-3)-2r^{2}+6r+5. 
\end{eqnarray*}

Then  $\Delta_{r}=2n(r-3)-2r^{2}+6r+5$ for $r\geq 4$, since $n\geq 2r$.\smallskip

Note that if $ n=2r$, then $\Delta_{r}=2r^{2}-6r+5$, for every $r \geq 2$.
\end{proof}

The following result can be found in \cite{RSVV}. 

\begin{thm}\label{t:diam-delta}
 In any graph $G$ the inequality $\delta(G) \leq \frac{1}{2} \diam G$ holds. 
\end{thm}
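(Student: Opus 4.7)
The plan is to reduce the problem to a single geodesic triangle and then use a one-line observation about geodesics. By the definition of $\delta(G) = \sup\{\delta(T) : T \text{ a geodesic triangle}\}$, it suffices to show that every geodesic triangle $T = \{x_1, x_2, x_3\}$ in $G$ is $\tfrac{1}{2}\diam G$-thin, i.e.\ for any side $J_i$ of $T$ and any point $x \in J_i$, one has $d(x, \cup_{j \neq i} J_j) \leq \tfrac{1}{2}\diam G$.

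By symmetry I can assume $x \in [x_1 x_2]$. The key observation is that the vertices $x_1$ and $x_2$ already lie on the union of the other two sides: $x_1 \in [x_3 x_1]$ and $x_2 \in [x_2 x_3]$. Consequently
\[
d\bigl(x,\, [x_2 x_3] \cup [x_3 x_1]\bigr) \;\leq\; \min\{d(x,x_1),\, d(x,x_2)\}.
\]
Since $x$ lies on a geodesic from $x_1$ to $x_2$, we have $d(x,x_1) + d(x,x_2) = d(x_1,x_2)$, so the smaller of the two summands is at most half of the sum. Therefore
\[
\min\{d(x,x_1),\, d(x,x_2)\} \;\leq\; \tfrac{1}{2}\, d(x_1, x_2) \;\leq\; \tfrac{1}{2}\, \diam G,
\]
which yields $\delta(T) \leq \tfrac{1}{2}\diam G$, and taking the supremum over $T$ gives the conclusion.

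There is no real obstacle here; the entire argument hinges on recognising that the endpoints of a given side are automatically available as ``cheap'' targets on the opposite sides, so thinness is controlled by the length of the side itself. The only thing to keep track of is that $d(x_1, x_2)$ is bounded by $\diam G$ regardless of whether $x_1, x_2$ are vertices or interior points of edges, which is immediate from the definition of diameter as a supremum over all pairs of points of $G$.
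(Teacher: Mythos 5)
Your proof is correct: the endpoints $x_1,x_2$ of the side containing $x$ lie on the other two sides, and since $x$ is on a geodesic, $\min\{d(x,x_1),d(x,x_2)\}\leq \tfrac{1}{2}d(x_1,x_2)\leq\tfrac{1}{2}\diam G$, which bounds $\delta(T)$ for every geodesic triangle $T$ and hence $\delta(G)$. The paper itself gives no proof of this statement (it is quoted from the reference [RSVV]), and your argument is precisely the standard one used there, so there is nothing to add.
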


We say that a vertex $v$ of a graph $G$ is a \emph{cut-vertex} if
$G \setminus \{v\}$ is not connected. A graph is \emph{two-connected} if it is connected and it does not contain cut-vertices.\smallskip

Given a graph $G$, we say that a family of subgraphs $\{G_{s}  \}  $ of $G$ is a \emph{T-decomposition} of $G$ if $\cup G_{s} = G $ and $G_{s}\cap G_{r}$   is either a \emph{cut-vertex} or the empty set for each $s\neq r$. Every graph has a \emph{T-decomposition}, as the following example shows. Given any edge in $G$, let us consider the maximal two-connected
subgraph containing it. We call to the set of these maximal \emph{two-connected} subgraphs $\{G_s\}_s$
the \emph{canonical T-decomposition} of $G$.\smallskip

Note that every $G_s$ in the \emph{canonical T-decomposition} of $G$
is an isometric subgraph of $G$.\smallskip


Given a graph $G$, let $\{G_{s}  \}  $ be the canonical T-decomposition of $G$. We define the  \emph{effective diameter} as:
$$
\diameff V(G):= \sup_s \text{diam}\,V(G_{s}), \hspace{0.3cm}\diameff G:= \sup_s \text{diam}\,G_{s}.
$$

The following result appears in \cite [Theorem 3]{BRSV2}.
\begin{lem} \label{l:bermu}
  Let $ G $ be a graph and $\{G_{s}  \}  $ be any T-decomposition of $G$, then $\delta(G) =\sup_{s} \delta (G_{s}) $.
 \end{lem}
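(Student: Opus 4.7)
The plan is to prove both inequalities $\delta(G)\geq\sup_s\delta(G_s)$ and $\delta(G)\leq\sup_s\delta(G_s)$, setting $\delta_*:=\sup_s\delta(G_s)$.

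For the lower bound, I would first check that each $G_s$ is an \emph{isometric} subgraph of $G$: any path in $G$ joining two points of $G_s$ that leaves $G_s$ must re-enter through cut-vertices (by the T-decomposition hypothesis), and such a detour cannot be shorter than staying inside $G_s$. Consequently every geodesic of $G_s$ is a geodesic of $G$, and every geodesic triangle in $G_s$ is a geodesic triangle in $G$ with the same sharp thinness constant, giving $\delta(G)\geq\delta(G_s)$ for each $s$ and hence $\delta(G)\geq\delta_*$.

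For the upper bound, fix a geodesic triangle $T=\{x_1,x_2,x_3\}$ in $G$ and a point $p\in[x_1x_2]$. The aim is to show $d\bigl(p,[x_2x_3]\cup[x_3x_1]\bigr)\leq\delta_*$. The structural tool is the \emph{block tree} $\mathcal{T}$ associated with the T-decomposition: one node per $G_s$, one node per cut-vertex, and an edge whenever a cut-vertex belongs to a block. One first verifies this is indeed a tree. Attach each $x_i$ to a block containing it (or to its cut-vertex node if applicable). Then the three points $x_1,x_2,x_3$ admit a unique median node $m\in\mathcal{T}$ through which the three pairwise paths in $\mathcal{T}$ all pass. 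Let $G_s$ be a block containing $p$.

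Now I would split into two cases according to the position of $G_s$ in $\mathcal{T}$. If $G_s=m$, then all three sides of $T$ traverse $G_s$; letting $c_i$ be the cut-vertex of $G_s$ on the branch toward $x_i$ (or $c_i=x_i$ when $x_i\in G_s$), the intersections $[x_jx_k]\cap G_s$ are geodesic segments joining $c_j$ to $c_k$, and together they form a genuine geodesic triangle in $G_s$. Hyperbolicity of $G_s$, combined with the isometric embedding, gives $d(p,[x_2x_3]\cup[x_3x_1])\leq\delta(G_s)\leq\delta_*$. If instead $G_s$ lies on the branch from $m$ toward some $x_i$, say $x_1$, then exactly two sides, $[x_1x_2]$ and $[x_1x_3]$, traverse $G_s$, and each intersects $G_s$ as a geodesic joining the same pair of entry/exit cut-vertices (or $x_1$ to a single cut-vertex). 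These two geodesics form a geodesic bigon in $G_s$, which is $\delta(G_s)$-thin, viewed as a degenerate geodesic triangle with one vertex repeated and one trivial side. Thus $d(p,[x_1x_3])\leq\delta(G_s)\leq\delta_*$.

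The main obstacle I expect is bookkeeping at boundary cases: $p$ being a cut-vertex lying in several blocks, one of the $x_i$ coinciding with a cut-vertex, or a side of $T$ entering and leaving $G_s$ tangentially at the same cut-vertex. Each of these is handled by choosing any legitimate block containing $p$ and replacing the ambiguous entry/exit points by the common cut-vertex, after which the two-case analysis above proceeds verbatim; the remaining ingredient is the standard observation that a geodesic bigon can be treated as a degenerate geodesic triangle, so the Rips condition in $G_s$ applies to it with the same constant $\delta(G_s)$.
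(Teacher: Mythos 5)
Two preliminary remarks: the paper itself contains no proof of this lemma --- it is quoted from \cite{BRSV2} (Theorem 3) --- so your argument can only be judged on its own merits; and your overall strategy (isometric pieces for the inequality $\delta(G)\geq\sup_s\delta(G_s)$; block tree, median node, and bigons-as-degenerate-triangles for the reverse inequality) is the natural one and does go through, essentially as you describe, for the \emph{canonical} T-decomposition into maximal two-connected subgraphs and cut-edges, which is the only decomposition this paper actually uses.

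The genuine gap is the step you dismiss with ``one first verifies this is indeed a tree''. With the definition of T-decomposition as literally stated in the paper ($\cup_s G_s=G$ and $G_s\cap G_r$ equal to a cut-vertex of $G$ or empty), the incidence structure of pieces and cut-vertices need \emph{not} be a tree, and then both halves of your argument collapse: the pieces need not be isometric in $G$, and there is no median node nor a well-defined pair of entry/exit cut-vertices for a side. Concretely, let $G$ be the $6$-cycle $w\,a\,u\,c\,v\,e$ with one pendant edge attached at each of $u$, $v$, $w$, and let $G_1,G_2,G_3$ be the paths $w\,a\,u$, $u\,c\,v$, $v\,e\,w$ together with the pendant edge at $u$, $v$, $w$ respectively. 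Then $G_1\cup G_2\cup G_3=G$, the pairwise intersections are the single vertices $u,v,w$, and each of these is a cut-vertex of $G$ (its removal isolates a pendant vertex), so this family satisfies the stated definition; its nerve is a $3$-cycle, every $G_i$ is a tree, hence $\sup_i\delta(G_i)=0$, while $\delta(G)=\delta(C_6)=3/2$ because $C_6$ is isometric in $G$ --- and if one lengthens the path inside $G_1$, that piece is not even an isometric subgraph. So the tree-ness of the nerve is not a routine verification but an additional hypothesis, without which even the conclusion of the lemma fails. To repair the proof you must either restrict to the canonical T-decomposition (where the nerve is the block-cut tree and the isometry of the pieces is the remark made in the paper), or strengthen the definition by requiring that each shared cut-vertex actually separates the two pieces meeting at it; under either assumption your isometry claim and your two-case median/bigon analysis, including the boundary bookkeeping you mention, are correct.
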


We will need the following result,
which allows to obtain global information about the
hyperbolicity of a graph from local information (see Lemma \ref{l:bermu} and Theorem \ref{t:diam-delta}).

\begin{lem}\label{l:effdiam}
Let $G$ be any graph. Then
$$ \delta(G)\leq \frac{1}{2}\diameff (G) $$
\end{lem}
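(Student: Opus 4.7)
The plan is to combine the two results the paper has just cited: Lemma \ref{l:bermu}, which decomposes the hyperbolicity constant over a T-decomposition, and Theorem \ref{t:diam-delta}, which bounds $\delta$ by half the diameter on each piece.

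Concretely, first I would take the canonical T-decomposition $\{G_s\}_s$ of $G$, whose pieces are the maximal two-connected subgraphs. Each $G_s$ is (as the paper notes immediately before the statement) an isometric subgraph of $G$, so $\diam G_s$ makes sense both intrinsically and as a subset of $G$. Applying Theorem \ref{t:diam-delta} to each piece gives
\[
\delta(G_s) \leq \tfrac{1}{2}\diam G_s
\]
for every $s$. Taking the supremum over $s$ and using the definition $\diameff(G):=\sup_s \diam G_s$, this yields
\[
\sup_s \delta(G_s) \leq \tfrac{1}{2}\diameff(G).
\]

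Next, I would invoke Lemma \ref{l:bermu} applied to the canonical T-decomposition, which gives the identity
\[
\delta(G) = \sup_s \delta(G_s).
\]
Combining the two displays yields the desired inequality $\delta(G) \leq \tfrac{1}{2}\diameff(G)$.

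There is no real obstacle here: the statement is essentially a two-line corollary of the two preceding results. The only minor subtlety worth noting explicitly in the write-up is that when $G$ has only one nontrivial two-connected component (so the T-decomposition is essentially trivial), the inequality degenerates to Theorem \ref{t:diam-delta}, and when $G$ is a tree, every $G_s$ is a single edge with $\delta(G_s)=0$, so the bound is consistent with $\delta(G)=0$.
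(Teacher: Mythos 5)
Your argument is correct and is exactly the one the paper intends: the lemma is stated without a written proof precisely because it follows by applying Theorem \ref{t:diam-delta} to each piece of the canonical T-decomposition and then invoking Lemma \ref{l:bermu} together with the definition of $\diameff$, as you do.
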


We define $ M(n,r):=\displaystyle{n \choose 2}-\Delta_{r}$, for $2\leq r\leq n/2$. 

We have the following expression for $M(n,r)$:

$$ M(n,2)=\dfrac{1}{2} [n^{2}-n-2]. $$

$$ M(n,3)=\dfrac{1}{2} [n^{2}-3n+2]. $$

$$ M(n,r)=\dfrac{1}{2} [(n-2r+3)^{2}+5n-19],\quad \text{if}\quad r\geq4. $$

\begin{lem}\label{l:effdiam2}
If $G\in \mathcal{G}(n,m)$ and $\diameff V(G)= \diam V(G)=r$, then $m\leq M(n,r).$
\end{lem}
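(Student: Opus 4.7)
The plan is to reduce the claim to Lemma \ref{l:lemma 1} by using a distance-level decomposition of $V(G)$ from one endpoint of a diameter-realizing pair. Since $\diameff V(G)=\diam V(G)=r$ and every block of the canonical T-decomposition is an isometric subgraph of $G$, I would first select a two-connected block $G_s$ with $\diam V(G_s)=r$, then pick $u,v\in V(G_s)$ with $d_G(u,v)=r$, and set $k_j:=|S_j|$ where $S_j:=\{w\in V(G):d_G(u,w)=j\}$ for $0\le j\le r$.

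The first substantive step, and what I expect to be the main obstacle, is to verify that $(k_1,\dots,k_r)$ lies in the feasible set $W$ of Lemma \ref{l:lemma 1}. The only non-obvious constraint is $k_j\ge 2$ for $1\le j\le r-1$. I would argue by contradiction: suppose $S_j=\{w\}$ for some $1\le j\le r-1$. On any path from $u$ to $v$, consecutive vertices have $d_G(u,\cdot)$ differing by at most $1$; since this function starts at $0$ and ends at $r$, a discrete intermediate-value argument forces every such path to meet $S_j$, i.e.\ to pass through $w$. Restricting to paths inside $G_s$, this says $w$ is a cut-vertex of $G_s$ separating $u$ from $v$, contradicting the two-connectivity of $G_s$. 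The conditions $k_0=1$ and $k_r\ge 1$ are immediate, and the bound $n\ge 2r$ that Lemma \ref{l:lemma 1} requires follows automatically from $n=1+\sum_{j=1}^{r-1}k_j+k_r\ge 1+2(r-1)+1$.

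Once feasibility is in hand, the edge count is routine. Any edge $\{w,w'\}\in E(G)$ satisfies $|d_G(u,w)-d_G(u,w')|\le 1$, so no edge of $G$ can connect $S_s$ with $S_t$ when $t-s\ge 2$. The number of such forbidden pairs in $K_n$ is exactly
\[
f_r(k_1,\dots,k_r)=\sum_{t=2}^{r}k_t\sum_{s=0}^{t-2}k_s,
\]
hence $m\le \binom{n}{2}-f_r(k_1,\dots,k_r)\le \binom{n}{2}-\Delta_r=M(n,r)$, where the second inequality uses $f_r\ge \Delta_r$ on $W$ from Lemma \ref{l:lemma 1}. The bulk of the work sits in the cut-vertex step; the remainder is a clean translation between the graph-theoretic setup and the optimization problem already solved.
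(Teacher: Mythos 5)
Your proof is correct and follows essentially the same route as the paper: level sets $S_j$ measured from a diametral vertex, the observation that no edge can join $S_t$ to $S_s$ when $t-s\ge 2$, and the reduction of the edge count to the optimization Lemma \ref{l:lemma 1}. The only difference is that you explicitly justify $k_j\ge 2$ for $1\le j\le r-1$ by choosing $u,v$ inside a two-connected block realizing $\diameff V(G)$ (using that the blocks of the canonical T-decomposition are isometric) and running the cut-vertex argument, a step the paper simply asserts from the hypothesis $\diameff V(G)=\diam V(G)=r$.
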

\begin{proof}
Let us consider $u,v \in V(G)$ such that $d(u,v)= \text{diam}\,V(G)=r$.
Denote by $k_{j}$ the cardinal of $S_{j}:=\{ w\in V(G) \mid d(w,u)=j \}$ for $0\leq j \leq r$. Since  $\diameff V(G)= \text{diam}\,V(G)=r$, we have $k_{0}=1, k_{j}\geq 2$ for $1\leq j \leq r-1$, and $k_{r}\geq 1$.\smallskip

Note that a vertex of $S_{j}$ and a vertex of $S_{0}\cup S_{1} \cup \dots \cup S_{j-2}$ can not be neighbours for $2\leq j \leq r$. Denote by $x$ 
the minimum number of edges that can be removed from the complete graph with $n$ vertices in order to obtain $G$. Since the diameter of $V(G)$ is $r$, we have obtained the following lower bound for $x$:\\
$$x\geq k_{2} +k_{3}(1+k_{1})+ k_{4}(1+k_{1}+k_{2})+...+k_{r-1}(1+k_{1}+k_{2}+...+k_{r-3})$$ $$+k_{r}(1+k_{1}+k_{2}+...+k_{r-2})=f_{r} .$$

Then $ x\geq \Delta_{r}$ by Lemma \ref{l:lemma 1} and $m= \displaystyle{n \choose 2}-x \leq  \displaystyle{n \choose 2}-\Delta_{r} = M(n,r)$.
\end{proof}

\begin{lem}\label{l:lemma 2}
The inequality
$$ \displaystyle{n-n_{0}+1\choose 2} \leq M(n,r)-M(n_{0},r)$$ holds for  $2\leq r \leq  n_{0} /2 $ and $n>n_{0}.$
\end{lem}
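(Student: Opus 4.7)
The plan is to verify the inequality by direct computation, splitting into the three cases $r=2$, $r=3$, and $r\geq 4$ that correspond to the three closed-form expressions given for $M(n,r)$. Writing $\binom{n-n_0+1}{2}=\tfrac12(n-n_0+1)(n-n_0)$, in every case the difference $M(n,r)-M(n_0,r)$ factors with $\tfrac12(n-n_0)$, so after cancellation the desired inequality reduces to a linear inequality in $n_0$ and $r$, which will follow easily from $n_0\geq 2r$.

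For $r=2$, a direct subtraction yields
\[
M(n,2)-M(n_0,2)=\tfrac12\bigl[(n^2-n_0^2)-(n-n_0)\bigr]=\tfrac12(n-n_0)(n+n_0-1),
\]
so dividing through by $\tfrac12(n-n_0)>0$ reduces the claim to $n-n_0+1\leq n+n_0-1$, i.e.\ $n_0\geq 1$, which is immediate since $n_0\geq 2r=4$. The case $r=3$ is analogous: one obtains $M(n,3)-M(n_0,3)=\tfrac12(n-n_0)(n+n_0-3)$, and the inequality becomes $n_0\geq 2$, true because $n_0\geq 6$.

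For $r\geq 4$, I would substitute $a:=n-2r+3$, $b:=n_0-2r+3$, so that $a-b=n-n_0$ and $a+b=n+n_0-4r+6$. Then
\[
M(n,r)-M(n_0,r)=\tfrac12\bigl[(a^2-b^2)+5(n-n_0)\bigr]=\tfrac{n-n_0}{2}\,(n+n_0-4r+11).
\]
Cancelling $\tfrac12(n-n_0)$ (positive since $n>n_0$), the claim becomes $n-n_0+1\leq n+n_0-4r+11$, i.e.\ $n_0\geq 2r-5$, which follows from $n_0\geq 2r$.

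The proof is therefore essentially a verification, with no real obstacle beyond keeping track of the quadratic expansion in the $r\geq 4$ case; the hypothesis $r\leq n_0/2$ is used precisely to pass from the reduced linear inequality to its conclusion in each case.
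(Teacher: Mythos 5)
Your proof is correct and follows essentially the same route as the paper: a case split on $r=2$, $r=3$, $r\geq 4$, followed by cancelling the positive factor $\tfrac12(n-n_0)$ so that the inequality reduces to a linear condition ($n_0\geq 1$, $n_0\geq 2$, $n_0\geq 2r-5$, respectively) implied by $2r\leq n_0$. The only difference is cosmetic: you work from the closed-form expressions for $M(n,r)$ with the substitution $a=n-2r+3$, $b=n_0-2r+3$, whereas the paper expands $M(n,r)=\binom{n}{2}-\Delta_r$ directly.
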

\begin{proof}
If $r\geq 4$, then the inequality holds if and only if
$$\frac{1}{2}(n-n_{0}+1)(n-n_{0}) \leq \frac{1}{2}n(n-1) -\frac{1}{2}n_{0}(n_{0}-1)- 2(n-n_{0})(r-3)$$
$$\Leftrightarrow \quad  (n-n_{0}+1)(n-n_{0})\leq n^{2}-n_{0}^{2}-(n-n_{0})-4(n-n_{0})(r-3)$$
$$\Leftrightarrow \quad n-n_{0}+1\leq n+n_{0}-1-4(r-3)\quad\Leftrightarrow \quad 2r\leq n_{0}+5 , $$
and this holds since $2r\leq n_{0}$.\smallskip

If $r=3$, then 
$$\frac{1}{2}(n-n_{0}+1)(n-n_{0}) \leq \frac{1}{2}n(n-1) -\frac{1}{2}n_{0}(n_{0}-1)- (n-1)-(n_{0}-1)$$
$$\Leftrightarrow \quad  (n-n_{0}+1)(n-n_{0})\leq n^{2}-n_{0}^{2}-(n-n_{0})-2(n-n_{0})$$
$$\Leftrightarrow \quad n-n_{0}+1\leq n+n_{0}-3\quad\Leftrightarrow \quad n_{0}\geq 2, $$
and this holds since $n_{0} \geq 2r=6$.\smallskip

If $r=2$, then 
$$\frac{1}{2}(n-n_{0}+1)(n-n_{0}) \leq \frac{1}{2}n(n-1) -\frac{1}{2}n_{0}(n_{0}-1)\quad\Leftrightarrow \quad n-n_{0}+1\leq n+n_{0}-1\quad\Leftrightarrow \quad n_{0}\geq 1. $$
\end{proof}

\begin{lem}\label{l:effdiam3}
If $ G \in \mathcal{G}(n,m)$ and $\diameff V(G)= r$, then $m\leq M(n,r).$
\end{lem}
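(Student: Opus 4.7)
The plan is to induct on the number of blocks in the canonical T-decomposition of $G$, peeling off one block at a time and combining the bound from Lemma \ref{l:effdiam2} (applied to a single block) with the arithmetic inequality supplied by Lemma \ref{l:lemma 2}. The base case is when $G$ is two-connected: then $\diameff V(G)=\diam V(G)=r$ and Lemma \ref{l:effdiam2} gives $m\leq M(n,r)$ immediately.

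For the inductive step, with $G$ having at least two blocks, I would exploit the block-cut tree, whose leaves are necessarily block-nodes, to select a leaf block $G_l$ containing exactly one cut vertex $v$ of $G$. Put $n_l=|V(G_l)|$, $m_l=|E(G_l)|$, and let $G'$ be the subgraph of $G$ obtained by deleting $V(G_l)\setminus\{v\}$. Then $G'$ has $n'=n-n_l+1$ vertices, $m'=m-m_l$ edges, and a short check shows that its canonical T-decomposition consists of the blocks of $G$ other than $G_l$; in particular $\diameff V(G')\leq r$.

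I would then split into two cases according to $\diameff V(G')$. If $\diameff V(G')=r$, the induction hypothesis gives $m'\leq M(n',r)$, while the trivial bound $m_l\leq\binom{n_l}{2}=\binom{n-n'+1}{2}$ combined with Lemma \ref{l:lemma 2} (applied with $n_0=n'$) yields $m_l\leq M(n,r)-M(n',r)$; adding produces $m\leq M(n,r)$. If $\diameff V(G')<r$, the block realizing $\diameff V(G)=r$ must be $G_l$ itself, so $\diam V(G_l)=r$ and Lemma \ref{l:effdiam2} gives $m_l\leq M(n_l,r)$; the trivial bound $m'\leq\binom{n'}{2}=\binom{n-n_l+1}{2}$ together with Lemma \ref{l:lemma 2} (now with $n_0=n_l$) symmetrically yields $m\leq M(n,r)$.

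The one technical point to watch is the hypothesis $r\leq n_0/2$ of Lemma \ref{l:lemma 2} in each of the two applications. This will not be an obstacle: in every case $n_0$ is the vertex count of a two-connected subgraph of $G$ of diameter $r$, and the argument from Lemma \ref{l:lemma 1} (for a diametrical pair $u,v$ each sphere $S_j=\{w:d(w,u)=j\}$ with $1\leq j\leq r-1$ must contain at least two vertices, for otherwise the unique vertex of $S_j$ would be a cut vertex) forces such a subgraph to have at least $1+2(r-1)+1=2r$ vertices. Once this is verified in both cases, the induction closes and the lemma follows.
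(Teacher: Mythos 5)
Your proof is correct and is essentially the paper's own argument: both rest on Lemma \ref{l:effdiam2} applied to the block of the canonical T-decomposition realizing the effective diameter, the trivial clique bound $\binom{n-n_{0}+1}{2}$ on the edges outside that block, and Lemma \ref{l:lemma 2} to merge the two (together with the same observation that a two-connected block of diameter $r$ has at least $2r$ vertices, so the hypotheses of Lemma \ref{l:lemma 2} are met). The only difference is organizational: the paper argues in one step, completing the extremal block with a complete graph on the remaining $n-n_{0}+1$ vertices, whereas you wrap the identical estimate in an induction on the number of blocks with a leaf-block peeling and a two-case split, which incidentally spells out why all edges outside the extremal block can be charged to $\binom{n-n_{0}+1}{2}$ even when several components attach to it at different cut vertices.
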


\begin{proof}
Given a graph $ G \in \mathcal{G}(n,m)$ with \emph{canonical T-decomposition} $ \{G_{s}\}$, let $G_{k}$ be a subgraph with $\diameff V (G_{k})=\diameff V(G) =r$. If $G_{k}$ has  $n_{0}$ vertices and  $m_{0}$ edges, then $ m_{0}\leq M(n_{0},r)$ by Lemma \ref{l:effdiam2}. Note that $2r\leq n_{0}$.\smallskip

Completing $G_{k}$ with the complete graph of  $n-n_{0}+1$ vertices (one of the vertices belongs to $G_{k}$) we get that $m\leq m_{0}+ \displaystyle{n-n_{0}+1\choose 2} .$\smallskip

By Lemma \ref{l:lemma 2} we have $ m\leq m_{0}+M(n,r)-M(n_{0},r)$ and, since $m_{0}\leq M(n_{0},r)$, we conclude $ m\leq  M(n,r). $ 
\end{proof}
\begin{cor}\label{cor:effdiam}
 If $ G \in \mathcal{G}(n,m) $, $2\leq r \leq n/2 $ and $ m> M(n,r)$, then $\diameff V (G)\neq r $.
\end{cor}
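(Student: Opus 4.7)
The plan is essentially trivial: this corollary is the logical contrapositive of Lemma \ref{l:effdiam3}. All of the substantive work — the optimization in Lemma \ref{l:lemma 1}, the estimate in Lemma \ref{l:effdiam2}, the completion argument via Lemma \ref{l:lemma 2}, and the reduction to a block of the canonical $T$-decomposition in Lemma \ref{l:effdiam3} — has already been done upstream. So my job is only to package that lemma into the stated contrapositive form and verify that the range hypothesis $2 \leq r \leq n/2$ matches what the lemma requires.

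Concretely, I would argue by contradiction. Suppose $G \in \mathcal{G}(n,m)$, that $2 \leq r \leq n/2$, and that $m > M(n,r)$, and assume moreover that $\diameff V(G) = r$. Then Lemma \ref{l:effdiam3} applies directly (its hypothesis is exactly $G \in \mathcal{G}(n,m)$ with $\diameff V(G) = r$), yielding $m \leq M(n,r)$. This contradicts the assumption $m > M(n,r)$, so we must have $\diameff V(G) \neq r$, as desired.

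The main (minor) thing to watch is the compatibility of the range of $r$: Lemma \ref{l:effdiam3} applies without any a priori restriction on $r$, but $M(n,r) = \binom{n}{2} - \Delta_r$ is defined via Lemma \ref{l:lemma 1} only for $2 \leq r \leq n/2$, and this is exactly the range assumed in the corollary. So there is no gap here, and no real obstacle. The substance of the result lies entirely in Lemma \ref{l:effdiam3} and the lemmas supporting it; the corollary itself is just a convenient restatement useful for subsequent applications (presumably to bound $\diameff V(G)$ from above when the edge count is large, and then to combine with Lemma \ref{l:effdiam} to bound $\delta(G)$).
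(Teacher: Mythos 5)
Your proposal is correct and matches the paper's treatment: the corollary is stated without proof precisely because it is the immediate contrapositive of Lemma \ref{l:effdiam3}, which is exactly the argument you give. Your remark on the range $2\leq r\leq n/2$ (needed only so that $M(n,r)$ is defined via Lemma \ref{l:lemma 1}) is also consistent with the paper.
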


We will show now that, in fact, this result can be improved.
\begin{thm}\label{t:effdiam}
 If $ G \in \mathcal{G}(n,m)$, $2\leq r \leq n/2 $ and $m> M(n,r)$, then $\diameff V(G)< r. $
\end{thm}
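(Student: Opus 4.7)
The approach is a short argument by contradiction that upgrades Corollary \ref{cor:effdiam}. Suppose towards a contradiction that $\diameff V(G) \geq r$. Corollary \ref{cor:effdiam} forbids $\diameff V(G) = r$ under the hypothesis $m > M(n,r)$, so we may in fact assume $r' := \diameff V(G) \geq r+1$, and the goal becomes to contradict $m > M(n,r)$.

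Next I would invoke Lemma \ref{l:effdiam3} at the larger value $r'$, which gives $m \leq M(n, r')$. To do this legitimately I need $r' \leq n/2$: the value $r'$ is realised as $\diam V(G_k)$ for some two-connected block $G_k$ of the canonical T-decomposition with, say, $n_0$ vertices. In a two-connected graph any pair of vertices lies on a common cycle, so $n_0 \geq 2r'$, and hence $r' \leq n_0/2 \leq n/2$, as required.

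The remaining step, where the real content sits, is to show that $r \mapsto M(n,r)$ is strictly decreasing on $\{2,3,\dots,\lfloor n/2\rfloor\}$. Using the three explicit expressions for $M(n,r)$ displayed before Lemma \ref{l:effdiam2}, one computes $M(n,2)-M(n,3) = n-2$, $M(n,3)-M(n,4) = n-2$, and $M(n,r)-M(n,r+1) = 2n-4r+4$ for $r \geq 4$. Since $r \leq n/2$ and $n \geq 2r \geq 4$, each of these differences is strictly positive, so by telescoping $M(n, r') < M(n, r)$.

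Putting the pieces together one gets $m \leq M(n, r') < M(n, r)$, contradicting $m > M(n, r)$, which finishes the proof. The only real obstacle is the monotonicity check, which must be split into three cases because of the different closed-form expressions at $r=2$, $r=3$, and $r \geq 4$, and it requires verifying that the transition at $r=4$ joins consistently; everything else in the argument is formal.
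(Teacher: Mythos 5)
Your proof is correct and follows essentially the same route as the paper: both reduce the statement to the (strict or non-strict) monotonicity of $M(n,r)$ in $r$, combined with Corollary \ref{cor:effdiam} / Lemma \ref{l:effdiam3}. The only differences are cosmetic — you verify monotonicity by finite differences of $\Delta_r$ instead of the derivative $\frac{\partial M(n,r)}{\partial r}=-2(n-2r+3)$, and you make explicit the bound $\diameff V(G)\leq n/2$ that the paper leaves implicit.
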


\begin{proof}
By Corollary \ref{cor:effdiam}, it suffices to prove that $M(n,r)$ is a decreasing function  of $ r $.
We have $\Delta_{2} \leq \Delta_{3} \leq \Delta_{4} $, since $1\leq n-1 \leq 2n-3$. Thus,  $M(n,2) \geq M(n,3) \geq M(n,4) $.
If $r \geq 4$, then $M(n,r)$ decreases as a function  of $ r $ since   $2r\leq n$ gives $ \dfrac{\partial M(n,r)}{\partial r}=-2(n-2r+3)\leq0$.

\end{proof}

Since $\diameff V(G)<r$ implies  $\diameff G \leq r $, Lemma \ref{l:effdiam} and Theorem \ref{t:effdiam} imply the following theorems.
\begin{thm}\label{t:bound1}
 If $  G \in \mathcal{G}(n,m) $, $ 2\leq r \leq n/2 $ and
$ m > M(n,r)$, then $\delta(G)\leq r/2  .$ 
\end{thm}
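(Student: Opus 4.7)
The plan is to chain together the two preceding results, Lemma \ref{l:effdiam} and Theorem \ref{t:effdiam}, via the observation flagged in the sentence preceding the statement: that a strict inequality $\diameff V(G) < r$ upgrades to $\diameff G \le r$ when we pass from the vertex set to the full metric graph.

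First, I would invoke Theorem \ref{t:effdiam} directly. Since $G \in \mathcal{G}(n,m)$, $2 \le r \le n/2$ and $m > M(n,r)$, Theorem \ref{t:effdiam} gives $\diameff V(G) < r$. Because all edges of $G$ have length $1$, distances between vertices in $G$ (and hence between vertices in each $G_s$ of the canonical $T$-decomposition) are integers, and therefore $\diameff V(G)$ is an integer. Combined with $\diameff V(G) < r$ this yields $\diameff V(G) \le r-1$.

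Next I would translate this integer bound on vertex distances into a bound on the effective diameter of the whole metric graph. Working inside any block $G_s$ of the canonical $T$-decomposition, any point of $G_s$ lies in some edge $[a,b]$, hence within distance $1/2$ of a vertex of $G_s$. Consequently, for any two points $x,y \in G_s$ there exist vertices $a,c$ of $G_s$ with $d(x,a), d(y,c) \le 1/2$, and by the triangle inequality
\[
d(x,y) \;\le\; d(x,a) + d(a,c) + d(c,y) \;\le\; \tfrac{1}{2} + \diam V(G_s) + \tfrac{1}{2} \;=\; \diam V(G_s) + 1.
\]
Taking the supremum over $s$ gives $\diameff G \le \diameff V(G) + 1 \le r$.

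Finally, Lemma \ref{l:effdiam} yields $\delta(G) \le \tfrac{1}{2}\diameff G \le r/2$, which is the claim. There is no real obstacle here: the argument is a two-line synthesis. The only place a reader might pause is the passage from $\diameff V(G) < r$ to $\diameff G \le r$, but this is purely formal given the integrality of vertex distances and the fact that every point of $G$ sits within $1/2$ of some vertex.
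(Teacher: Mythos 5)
Your proof is correct and follows essentially the same route as the paper, which obtains the result by combining Theorem \ref{t:effdiam} with Lemma \ref{l:effdiam} via the observation that $\diameff V(G)<r$ implies $\diameff G\leq r$; you merely spell out (correctly, using integrality of vertex distances and the fact that every point lies within $1/2$ of a vertex of its block) the step the paper states without proof.
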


Define $M(n,1):=n(n-1)/2.$

\begin{thm}\label{t:bound2}
 
 If $n\geq1$ and $m=n-1$, then $B(n,m)=0$. If $n\geq3$ and $n\leq m \leq n+3$, then $B(n,m)=n/4$.
 If $  G \in \mathcal{G}(n,m) $, $ 2\leq r \leq n/2 $ and
$  M(n,r)<m \leq M(n,r-1)$, then $B(n,m)\leq r/2 $. 
\end{thm}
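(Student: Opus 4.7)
The statement contains three assertions; I would address them in increasing order of difficulty. The first, $B(n, n-1) = 0$, is immediate: a connected graph on $n$ vertices with $n-1$ edges is a tree, and trees are $0$-hyperbolic as recalled in the introduction. The third assertion, $B(n, m) \leq r/2$ for $M(n, r) < m \leq M(n, r-1)$, is a direct corollary of Theorem~\ref{t:bound1}: the hypothesis $m > M(n, r)$ yields $\delta(G) \leq r/2$ for every $G \in \mathcal{G}(n, m)$, so passing to the supremum over $G$ gives the desired bound on $B(n, m)$.

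The middle assertion, $B(n, m) = n/4$ for $n \leq m \leq n+3$ and $n \geq 3$, requires both bounds. For the upper bound $B(n, m) \leq n/4$, the plan is to apply the canonical T-decomposition $\{G_s\}$ to an arbitrary $G \in \mathcal{G}(n, m)$: Lemma~\ref{l:bermu} gives $\delta(G) = \sup_s \delta(G_s)$, and for any $2$-connected block $G_s$ on $n_s \leq n$ vertices, $2$-connectedness implies (via Whitney) that every pair of vertices lies on a common cycle of length at most $n_s$; hence $\diam(G_s) \leq n_s/2$, and Theorem~\ref{t:diam-delta} yields $\delta(G_s) \leq n_s/4 \leq n/4$. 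For the lower bound $B(n, m) \geq n/4$, the cycle $C_n \in \mathcal{G}(n, n)$ already realizes $\delta = n/4$, which handles $m = n$. For $m = n+1, n+2, n+3$, I would augment $C_n$ by one, two, or three chords placed so that a specific geodesic triangle of $C_n$ (with vertices suitably spaced around the cycle and the long side going round the opposite half) remains a geodesic triangle with the same side lengths after augmentation, and a direct distance computation then shows that its midpoints still realize $\delta \geq n/4$.

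The \emph{main obstacle} is this lower-bound construction: generic chords shorten cycle distances and can collapse the witnessing triangle, so the extra edges have to be clustered in positions that do not reduce the critical distances of the triangle. Concretely, the chords should be placed so that no chord provides a shortcut between the midpoint of the long geodesic of the triangle and any vertex of the two short sides; for example, for $n=8$ and $m=n+3$ one can take $C_{8}$ together with chords $\{v_{1}v_{3},\,v_{1}v_{4},\,v_{5}v_{7}\}$ and verify that the triangle on $v_{2},v_{4},v_{6}$ (with $[v_{2}v_{6}]$ chosen along $v_{2}v_{1}v_{8}v_{7}v_{6}$) still witnesses $\delta \geq 2 = n/4$. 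Once such a configuration is found for each $m \in \{n+1, n+2, n+3\}$, the verification reduces to a routine distance computation, which combined with the upper bound yields the equality $B(n, m) = n/4$.
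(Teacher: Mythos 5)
Your treatment of the first and third assertions coincides with the paper's: trees give $B(n,n-1)=0$, and the bound $B(n,m)\le r/2$ is read off from Theorem \ref{t:bound1}. The difference is the middle assertion: the paper does not prove it at all, but quotes \cite[Theorem 30]{MRSV} both for the universal upper bound $\delta(G)\le n/4$ and for the existence of some $G_0\in\mathcal{G}(n,m)$ with $\delta(G_0)=n/4$ when $n\le m\le n+3$. Your block-decomposition argument for the upper bound is essentially the standard proof of that cited bound and is fine in spirit, with one small repair needed: Whitney gives the common-cycle property for pairs of \emph{vertices}, which only bounds $\diam V(G_s)$, while Theorem \ref{t:diam-delta} needs $\diam G_s$ (all points, including interior points of edges); you should invoke the standard strengthening that in a two-connected graph any two \emph{edges} lie on a common cycle, and treat bridge-blocks trivially.

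The genuine gap is in your lower bound for $m\in\{n+1,n+2,n+3\}$. You give one configuration, $C_8$ with chords $v_1v_3,v_1v_4,v_5v_7$, and say that finding analogous configurations for general $n$ is routine; but this configuration does not generalize, and the guiding principle you state (chords should not shortcut the midpoint of the long side to the short sides) misses the binding constraint. To get $\delta\ge n/4$ you need two points at distance $n/2$, i.e.\ an antipodal pair of the Hamiltonian cycle neither of whose arcs is shortened; a chord whose two endpoints lie in the same arc always shortens it, so every chord must be ``crossed'' by the cut pair. For the chords $v_1v_3$ and $v_5v_7$ this forces one cut point strictly inside the arc $v_1v_2v_3$ and the other strictly inside $v_5v_6v_7$, and these lie at cycle-distance less than $6$, which cannot equal $n/2$ once $n\ge 12$; hence for such $n$ your graph has $\diam<n/2$ and $\delta<n/4$ by Theorem \ref{t:diam-delta}, so the proposed witness cannot exist. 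A correct general construction must place the chords so that each one crosses a fixed diametral pair with its endpoints adjacent to the two ``poles'' (for instance $v_2v_n$, $v_2v_{n-1}$ and a symmetric chord at the antipodal vertex), after which the bigon between the poles with the midpoint of one arc gives $\delta\ge n/4$; the few small values of $n$ where these chords degenerate must be checked separately. Alternatively, simply citing \cite[Theorem 30]{MRSV}, as the paper does, closes the argument.
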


\begin{proof}

If $n\geq1$ and $m=n-1$, then every  $G \in \mathcal{G}(n,m) $ is a tree and $\delta(G)=0$; consequently, $B(n,m)=0$.\smallskip 

If $n\geq3$ and $n\leq m\leq n+3$, then  \cite [Theorem 30]{MRSV} gives that there exists $G_{0} \in \mathcal{G}(n,m) $ with $\delta(G_{0})=n/4$. Furthermore, $\delta(G)\leq n/4$ for every $n,m$ and $G \in \mathcal{G}(n,m) $ by \cite [Theorem 30]{MRSV}. Hence, $B(n,m)=n/4$ for $3\leq n \leq m \leq n+3$.\smallskip 

The second part of the statement is a consequence of Theorem \ref{t:bound1}.

 \end{proof}

\section{A lower bound for $B(n,m)$}

\begin{thm}\label{lowerbound0}
  If $3\leq n_{0} \leq n$ and
  $ n < m \leq n+ \displaystyle{n_{0} -1\choose 2},$
  then $B(n,m)\geq  (n-n_{0}+3)/4.$
\end{thm}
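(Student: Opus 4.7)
The plan is constructive: it is enough to exhibit a single $G\in\mathcal{G}(n,m)$ with $\delta(G)\geq(n-n_{0}+3)/4$. My guiding idea is to build $G$ so that it contains the cycle $C_{n-n_{0}+3}$ as an \emph{isometric} subgraph; geodesic triangles of the embedded cycle are then geodesic triangles of $G$ with the same thin-constant, forcing
$\delta(G)\geq\delta(C_{n-n_{0}+3})=(n-n_{0}+3)/4$.

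I would first dispose of the boundary case $n_{0}=3$, where the hypothesis collapses to $m=n+1$; here \cite[Theorem~30]{MRSV}, already invoked in the proof of Theorem~\ref{t:bound2}, produces a graph with $\delta=n/4=(n-n_{0}+3)/4$. For $n_{0}\geq 4$, I would start from the graph $G_{0}$ obtained by gluing the cycle $C_{n-n_{0}+3}$ and the complete graph $K_{n_{0}-1}$ along a single common edge $v_{0}v_{n-n_{0}+2}$: it has exactly $(n-n_{0}+3)+(n_{0}-1)-2=n$ vertices and $n-n_{0}+2+\binom{n_{0}-1}{2}$ edges, and the cycle is isometric in $G_{0}$ since any detour through the clique uses at least two edges whereas the shared edge is already present in the cycle with length $1$. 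To reach every $m$ in the range $(n,n+\binom{n_{0}-1}{2}]$ I would then either delete edges from the interior of $K_{n_{0}-1}$ while keeping the shared edge and enough edges for connectivity (for smaller $m$), or insert bridge edges from the two cycle vertices $v_{1}$ and $v_{n-n_{0}+1}$ adjacent to the shared edge to the $n_{0}-3$ interior vertices of the clique (for larger $m$), subject to the single constraint that no interior clique vertex receives bridges from both $v_{1}$ and $v_{n-n_{0}+1}$.

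The main technical step, which I expect to be the most delicate, is the \emph{isometry check}: I must verify that no combination of bridges and clique edges produces a path between two main-cycle vertices strictly shorter than their cycle distance $\min(|i-j|,\,n-n_{0}+3-|i-j|)$. The restriction above is precisely what rules out the two-step shortcut $v_{1}\to b\to v_{n-n_{0}+1}$, which would otherwise be shorter than $\min(n-n_{0},3)$; every other two- or three-step detour through the clique begins or ends with a move from a cycle vertex to a clique vertex costing $1$ and then continues via $v_{0}$ or $v_{n-n_{0}+2}$, so it is at least $2$ plus the cycle distance from $v_{0}$ (or $v_{n-n_{0}+2}$) to the target, which by the triangle inequality equals or exceeds the direct cycle distance. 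Once this isometry is confirmed, the bound $\delta(G)\geq(n-n_{0}+3)/4$ is immediate, completing the proof.
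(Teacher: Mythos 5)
Your construction cannot reach the right endpoint of the range of $m$, and this is not a small bookkeeping slip but a structural limitation of the isometric-cycle strategy. Your base graph (the cycle $C_{n-n_0+3}$ and $K_{n_0-1}$ glued along one edge) has $n-n_0+2+\binom{n_0-1}{2}$ edges, so to reach $m=n+\binom{n_0-1}{2}$ you must add $n_0-2$ further edges; but your bridge rule (no interior clique vertex joined to both $v_1$ and $v_{n-n_0+1}$) allows at most one bridge per interior vertex, hence at most $n_0-3$ extra edges, and you top out at $m=n+\binom{n_0-1}{2}-1$. Worse, when $n-n_0\geq 3$ no graph in $\mathcal{G}\bigl(n,\,n+\binom{n_0-1}{2}\bigr)$ contains an isometric cycle of length $k=n-n_0+3$ at all: isometry forbids every chord between two cycle vertices, forces the cycle-neighbourhood of each of the $n_0-3$ off-cycle vertices to have pairwise cycle-distance at most $2$ (hence at most $3$ cycle neighbours each when $k\geq 6$), and the off-cycle vertices span at most $\binom{n_0-3}{2}$ edges, giving the ceiling $k+3(n_0-3)+\binom{n_0-3}{2}=n+\binom{n_0-1}{2}-1$. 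So the case $m=n+\binom{n_0-1}{2}$ genuinely requires a different argument; for $n_0=3,4$ it happens to be covered by \cite[Theorem 30]{MRSV} (then $m\leq n+3$), but for $n_0\geq 5$ nothing in your proposal covers it.

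For comparison, the paper avoids this by never asking for an isometric cycle: it takes $C_n$, adds the $m-n$ chords inside a block $\{v_1,\dots,v_{n_0}\}$ of consecutive vertices so that some $v_i$ is adjacent to both $v_1$ and $v_{n_0}$, and then runs a bigon argument: with $y$ the midpoint of the complementary arc $\eta$, the curves $\gamma_1=[v_i,v_1]\cup[v_1y]$ and $\gamma_2=[v_i,v_{n_0}]\cup[v_{n_0}y]$ are geodesics of length $(n-n_0+3)/2$, and the midpoint of $\gamma_1$ is at distance $(n-n_0+3)/4$ from $\gamma_2$. This estimate tolerates the chord $[v_1,v_{n_0}]$ (the bigon cycle need not be isometric), which is exactly what lets one construction cover the whole range up to and including $m=n+\binom{n_0-1}{2}$. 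The remainder of your argument is essentially sound: the $n_0=3$ case via \cite[Theorem 30]{MRSV}, the isometry of the glued cycle in $G_0$, the deletion/bridge bookkeeping for $n<m\leq n+\binom{n_0-1}{2}-1$, and the standard facts $\delta(C_k)=k/4$ and $\delta(G)\geq\delta(\Gamma)$ for an isometric subgraph $\Gamma$ (which deserves at least a one-line justification that vertex-isometry upgrades to isometry of the metric subgraph). The fix is therefore to treat the endpoint value of $m$ separately with a bigon-type estimate of the paper's kind, or simply to adopt the paper's single construction throughout.
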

\begin{proof}
Let us consider a cycle graph with $n$ vertices $C_{n}$. Given $n_{0}\geq 3$, choose a path $\{v_{1},...,v_{n_{0}}\}$ in $C_{n}$ and add $\displaystyle{n_{0}\choose 2} -(n_{0}-1)=\displaystyle{n_{0}-1\choose 2}$ edges to  $C_{n}$ if $n_{0}<n$, or $\displaystyle{n\choose 2} -n$ if $n_{0}=n$, obtaining a graph $G_{n,n_{0}}$ such that the induced subgraph by $\{v_{1},...,v_{n_{0}}\}$ in $G_{n,n_{0}}$ is isomorphic to the complete graph with $n_{0}$ vertices.\smallskip

Choose a path  $\{v_{1},...,v_{n_{0}}\}$ in $C_{n}$ and add $m-n$ edges to  $C_{n}$, obtaining a subgraph $G$ of $G_{n,n_{0}}$ with at least some $v_{i}$ verifying $[v_{i},v_{1}],[v_{i},v_{n_{0}}]\in E(G)$. If $n_{0}=3$, then $n<m\leq n+1$ and $m=n+1.$\smallskip

Note that $G \in \mathcal{G}(n,m) $. Let $\eta$ be the path in $C_{n}$ joining $v_{1}$ and $v_{n_{0}}$  with $v_{2},...,v_{n_{0}-1}\notin \eta$  and let $y$ be the midpoint of $\eta$. Define $x:=v_{i}$, $\gamma_{1}=[x,v_{1}]\cup[v_{1} y]$ and $\gamma_{2}=[x,v_{n_{0}}]\cup[v_{n_{0}} y]$. Then $\gamma_{1}$ and $\gamma_{2}$ are geodesics from $x$ to $y$ and $$d_{G}(x,y)=1+\frac{n-(n_{0}-1)}{2}=\frac{n-n_{0}+3}{2}.$$

 Consider the geodesic bigon $T=\{\gamma_{1},\gamma_{2}\}$ and the midpoint $p$ of $\gamma_{1}$. Then $$B(n,m)\geq \delta(G)\geq d_{G}(p,\gamma_{2})=\frac{1}{2}L(\gamma_{1})=\frac{n-n_{0}+3}{4}.$$ 
 \end{proof}
Theorems \ref{t:bound2} and \ref{lowerbound0}  have the following direct consequence.

\begin{thm}\label{lowerbound}

If $n\geq1$ and $m=n-1$, then $B(n,m)=0$. If $n\geq3$ and $n\leq m \leq n+3$, then $B(n,m)=n/4$. If $5\leq n_{0} \leq n$ and
  $ n + \displaystyle{n_{0} -2\choose 2}  < m \leq n+ \displaystyle{n_{0} -1\choose 2},$
    then $B(n,m)\geq  (n-n_{0}+3)/4.$

\end{thm}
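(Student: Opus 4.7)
The plan is to observe that Theorem \ref{lowerbound} is simply a compilation of results already established, with the third clause a direct application of Theorem \ref{lowerbound0}. I would organize the proof into three short blocks matching the three clauses of the statement.

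First, for the clause $m = n-1$: any $G \in \mathcal{G}(n,n-1)$ is a connected graph on $n$ vertices with $n-1$ edges, hence a tree, and trees are $0$-hyperbolic. This is already recorded in the first sentence of Theorem \ref{t:bound2}, so no new argument is needed.

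Next, for the clause $n \leq m \leq n+3$: the equality $B(n,m) = n/4$ is stated verbatim in the second sentence of Theorem \ref{t:bound2}, which in turn relies on \cite[Theorem 30]{MRSV} for both the upper and lower bounds. So I would just invoke Theorem \ref{t:bound2} directly here.

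For the third clause, I want to apply Theorem \ref{lowerbound0} with the given value of $n_0$. I have to verify its two hypotheses: (i) $3 \leq n_0 \leq n$, and (ii) $n < m \leq n + \binom{n_0-1}{2}$. Hypothesis (i) is immediate from $n_0 \geq 5$ and the assumption $n_0 \leq n$. For (ii), the upper bound on $m$ is part of the assumption; for the lower bound, I note that $n_0 \geq 5$ gives $\binom{n_0-2}{2} \geq \binom{3}{2} = 3 \geq 0$, so $m > n + \binom{n_0-2}{2} \geq n$. Theorem \ref{lowerbound0} then yields $B(n,m) \geq (n - n_0 + 3)/4$, which is the desired conclusion.

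There is no real obstacle here — the statement is deliberately framed as a corollary of the two preceding theorems. The only minor point worth flagging in the write-up is why the stronger hypothesis $m > n + \binom{n_0-2}{2}$ (rather than just $m > n$) appears: this is the sharp condition that prevents one from applying Theorem \ref{lowerbound0} with $n_0 - 1$ in place of $n_0$, so it selects the smallest admissible value of $n_0$ and thereby gives the tightest available lower bound on $B(n,m)$ within the stated range. No calculation is required beyond checking the two inequalities above.
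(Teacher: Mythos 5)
Your proposal is correct and follows exactly the route the paper intends: the paper gives no separate argument, stating only that the theorem is a direct consequence of Theorems \ref{t:bound2} and \ref{lowerbound0}, which is precisely your three-block reduction. Your verification that $n_0\geq 5$ ensures the hypotheses $3\leq n_0\leq n$ and $m>n$ of Theorem \ref{lowerbound0} is the only checking needed, and it is done correctly.
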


\section{Difference of the bounds of $B(n,m)$}


 Let $b_{1}(n,m)$ and $b_{2}(n,m)$ be the lower and  upper bounds of $B(n,m)$ obtained in  Theorems \ref{lowerbound} and \ref{t:bound2}, respectively. In this section we prove that the difference between $b_{2}$ and $b_{1}$ is $O(\sqrt{n}\, )$. This is a good estimate, since the sharp upper bound for graphs with $n$ vertices is $n/4$ (see \cite [Theorem 30]{MRSV}). \smallskip

\begin{lem}\label{estcot}
   Given integers $n$ and $r$ with   $2\leq r\leq n/2$, let $n_{0}$ be the smallest natural number such that  $3\leq n_{0}\leq n$ and $M(n,r)<n+\displaystyle{n_{0} -1\choose 2}$. 
   The following holds for  $M(n,r)<m \leq n+\displaystyle{n_{0} -1\choose 2}$.
 
  \begin{itemize}

    \item  If $r=2$, then  $ b_{2}(n,m)=b_{1}(n,m).$

   \item  If $r=3$, then $ b_{2}(n,m)-b_{1}(n,m)< 3/4.$

    \item  If $4\leq r \leq n/2$, then $b_{2}(n,m)-b_{1}(n,m)  < \sqrt{3n}/4.$
   
    
  \end{itemize}

\end{lem}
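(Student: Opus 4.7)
The plan is to compute the difference $b_2(n,m) - b_1(n,m)$ directly by plugging in the explicit formulas for $M(n,r)$ and exploiting the minimality condition that defines $n_0$. From Theorem~\ref{t:bound2} one has $b_2(n,m) = r/2$, and from Theorem~\ref{lowerbound} (applied with the given $n_0$) one has $b_1(n,m) = (n - n_0 + 3)/4$, so
\[
b_2(n,m) - b_1(n,m) \;=\; \frac{2r + n_0 - n - 3}{4}.
\]
The defining inequality $(n_0 - 1)(n_0 - 2) > 2(M(n,r) - n)$, combined with the minimality of $n_0$ (for $n_0 \geq 4$), also yields $(n_0 - 2)(n_0 - 3) \leq 2(M(n,r) - n)$. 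I plan to use this two-sided squeeze to pin down or tightly bound $n_0$ in each case.

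The cases $r = 2$ and $r = 3$ should be dispatched by direct inspection. For $r = 2$, the equality $M(n,2) = \binom{n}{2} - 1$ collapses the range to $m = \binom{n}{2}$ (the complete graph), and a short check identifies $n_0$ and yields the claimed equality. For $r = 3$, one has $2(M(n,3) - n) = n^2 - 5n + 2$; testing $n_0 \in \{n-2, n-1, n\}$ in the minimality inequality pins down $n_0 = n - 1$, so the difference equals $1/2 < 3/4$.

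The main case is $r \geq 4$. Setting $a := n - 2r + 3 \geq 3$, the formula for $M(n,r)$ gives $2(M(n,r) - n) = a^2 + 3n - 19$, and completing the square in $(n_0 - 2)(n_0 - 3) \leq a^2 + 3n - 19$ yields $n_0 \leq \tfrac{5}{2} + \sqrt{a^2 + 3n - \tfrac{75}{4}}$; hence
\[
4\bigl(b_2(n,m) - b_1(n,m)\bigr) \;=\; n_0 - a \;\leq\; \tfrac{5}{2} + \sqrt{a^2 + 3n - \tfrac{75}{4}} - a.
\]
The remaining task is to show this quantity is strictly less than $\sqrt{3n}$. I would rewrite the target as $\sqrt{a^2 + 3n - 75/4} < \sqrt{3n} + a - 5/2$ and square both sides (both positive, since $a \geq 3$ and $n \geq 2r \geq 8$ imply $\sqrt{3n} \geq \sqrt{24} > 5/2$); after simplification the inequality reduces to the elementary $5(a - 5) < (2a - 5)\sqrt{3n}$, which is trivial for $3 \leq a \leq 5$ (LHS $\leq 0$, RHS $> 0$) and for $a > 5$ follows from $5(a-5)/(2a-5) < 5/2 < \sqrt{3n}$.

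The hard part is precisely this last squaring step. The naive estimate $\sqrt{a^2 + b} - a \leq \sqrt{b}$ would only give $n_0 - a \leq \tfrac{5}{2} + \sqrt{3n - 75/4}$, leaving an unabsorbable additive constant of $5/2$ beyond the target $\sqrt{3n}/4$; only by performing the squaring manipulation above and exploiting the strict slack $\sqrt{3n} > 5/2$ does the clean $\sqrt{3n}/4$ bound come out.
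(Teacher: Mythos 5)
Your treatment of the two substantive cases is correct, and it takes a genuinely different (and in places sharper) route than the paper. The paper bounds $n_0$ by $n_0'=\tfrac{5+\sqrt{9+4\lambda_r}}{2}$ (where $\lambda_r=2(M(n,r)-n)-2$) and then, for $r\ge 4$, controls $\tfrac{r}{2}-\tfrac{n-n_0'+3}{4}$ by showing the function $F(r)=4r+\sqrt{9+4\lambda_r}$ is increasing on $[4,n/2]$ and evaluating at $r=n/2$, finishing with $\sqrt{12n-39}<2\sqrt{3n}$. You instead use the minimality of $n_0$ in the form $(n_0-2)(n_0-3)\le 2(M(n,r)-n)$, complete the square to get $n_0\le \tfrac52+\sqrt{a^2+3n-\tfrac{75}{4}}$ with $a=n-2r+3\ge 3$, and prove the uniform-in-$a$ inequality $\tfrac52+\sqrt{a^2+3n-75/4}-a<\sqrt{3n}$ by squaring, reducing to $5(a-5)<(2a-5)\sqrt{3n}$; I checked this reduction and it is correct (both sides of the pre-squaring inequality are positive since $a\ge 3$ and $\sqrt{3n}\ge\sqrt{24}>5/2$). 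This avoids the paper's monotonicity/calculus step entirely. For $r=3$ you pin down $n_0=n-1$ exactly (valid for $n\ge 2r=6$) and get the bound $1/2$, slightly better than the paper's $3/4$; the paper instead only uses $n_0\le n_0'$. Two cosmetic points: the identities $b_2=r/2$ and $b_1=(n-n_0+3)/4$ should really be the one-sided bounds $b_2\le r/2$ and $b_1\ge(n-n_0+3)/4$ (a smaller $n_0$ may govern $b_1$ when $m\le n+\binom{n_0-2}{2}$, and $m$ may also exceed $M(n,r-1)$), but both inequalities go the right way for bounding $b_2-b_1$, and the paper is equally loose here; also, when $n_0=3$ your minimality inequality is unavailable, but then $n_0-a\le 0$ and the claim is trivial, so just say so.

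The one genuine gap is the $r=2$ bullet. Your "short check" along the formulaic route does not yield the claimed equality: the defining condition $(n_0-1)(n_0-2)>2(M(n,2)-n)=n^2-3n-2$ forces $n_0=n$ (for $n\ge 4$), so your formulas give $b_2-b_1=\tfrac{2}{2}-\tfrac{n-n+3}{4}=\tfrac14$, not $0$. The equality requires the paper's ad hoc argument: since $M(n,2)=\binom{n}{2}-1$, the only admissible value is $m=\binom{n}{2}$, the only graph is $K_n$, and $\delta(K_n)=1$, so both the lower and upper bounds are the exact value $1$ (i.e., one must use the exact hyperbolicity constant of the complete graph rather than the generic lower bound $(n-n_0+3)/4$). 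You correctly observe that the range collapses to the complete graph, but you never invoke $\delta(K_n)=1$, and without it the claimed equality does not follow from your setup.
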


\begin{rem}
Note that we always have $M(n,r) \leq \frac{1}{2}n(n-1)< n+\displaystyle{n -1\choose 2}$, and this implies the existence of $n_{0}$.
\end{rem}

\begin{proof}


 If $r=2$, then $M(n,r)=\displaystyle{n \choose 2}-1  $ and $M(n,2)<m$ implies $m=\displaystyle{n \choose 2} $. Hence, every graph $G \in \mathcal{G}(n,m) $ is isomorphic to the complete graph with $n$ vertices, and  $\delta(G)=1$ since $n\geq4$. Thus, $A(n,m)=B(n,m)=1$ and $ b_{1}(n,m)=b_{2}(n,m)=1.$\smallskip

 If $r=3$, then
 \smallskip

 $$M(n,3)<  n+\displaystyle{n_{0} -1\choose 2}\quad\Leftrightarrow \quad   \displaystyle{n\choose 2}-(n-1)<n+\displaystyle{n_{0} -1\choose 2}\quad\Leftrightarrow \quad n^{2}-5n< n_{0}^{2}-3n_{0}.$$\smallskip

 Let us define $\lambda_{3} := n^{2}-5n$. Since $n\geq4$, the smallest $n_{0}$ verifying the previous inequality is the smallest $n_{0}$ satisfying $n_{0}> \dfrac{3+\sqrt{9+4\lambda_{3}}}{2}$. Thus $n_{0}\leq  \dfrac{5+\sqrt{9+4\lambda_{3}}}{2}=:n_{0}' $.
 
  Then, the following  holds
  $$\frac{r}{2}- \frac{n-n_{0}+3}{4}\leq \dfrac{3}{2}- \frac{n-n_{0}'+3}{4}= \dfrac{ 11-2n+  \sqrt{9+4\lambda_{3}}}{8}=\dfrac{ 11-2n+  \sqrt{4(n-5/2)^{2}-16}}{8}.$$
   
   Note that
   
    $$  \dfrac{ 11-2n+\sqrt{4(n-5/2)^{2}-16}   }{8} <  \dfrac{ 11-2n+\sqrt{4(n-5/2)^{2}}   }{8} = \dfrac{ 11-2n+2(n-5/2) }{8} =\dfrac{3}{4}. $$

  Therefore, for $r=3$, we obtain $ b_{2}(n,m)-b_{1}(n,m)=\dfrac{r}{2} -\dfrac{n-n_{0}+3}{4}  <\dfrac{3}{4}.$
  \smallskip

 Note that if $r\geq4$, then
 \smallskip

 $$M(n,r)<  n+\displaystyle{n_{0} -1\choose 2}\quad\Leftrightarrow \quad   \displaystyle{n\choose 2}-(2n(r-3)-2r^{2}+6r+5)<n+\displaystyle{n_{0} -1\choose 2}$$
 $$\Leftrightarrow \quad n^{2}+9n-4nr+4r^{2}-12r-12< n_{0}^{2}-3n_{0}.$$\smallskip

 Let us define $\lambda_{r} := n^{2}+9n-4nr+4r^{2}-12r-12$. Then, the smallest $n_{0}$ verifying the previous inequality is the smallest $n_{0}$ satisfying $n_{0}> \dfrac{3+\sqrt{9+4\lambda_{r}}}{2} $. Thus $n_{0}\leq \dfrac{5+\sqrt{9+4\lambda_{r}}}{2}=:n_{0}' $.

 Note that
  $$\frac{r}{2}- \frac{n-n_{0}+3}{4}\leq \frac{r}{2}- \frac{n-n_{0}'+3}{4}= \dfrac{ 4r + \sqrt{9+4\lambda_{r}}-2n-1}{8}.$$

  Let us fix $n$ and consider the function $F(r)=4r+\sqrt{9+4\lambda_{r}}$. It can be easily checked that $F'(r)=4+\dfrac{2(-4n+8r-12)}{\sqrt{9+4\lambda_{r}}}>0$ for all $r\in [4,n/2]$ if and only if $n>6$. 
   
 Since $r\geq4$, we have $n\geq8$, $F(r)$ is an increasing function and  $F(n/2)=2n+\sqrt{9+4(3n-12)}$ is the maximum value of $F(r)$.

  Then, the following inequalities hold
 $$ b_{2}(n,m)-b_{1}(n,m)=\frac{r}{2}- \frac{n-n_{0}+3}{4}\leq \dfrac{F(n/2)-2n -1}{8}< \dfrac{\sqrt{9+4(3n-12)} }{8}< \dfrac{2\sqrt{3n}}{8}=\dfrac{\sqrt{3n}}{4}. $$ 
 
\end{proof}

\begin{lem} \label{diff}

 Given  integers $n$ and $r$ with   $3\leq r\leq n/2$, let $n_{1}$ be the smallest natural number such that $3\leq n_{1}\leq n$ and $M(n,r-1)< n+\displaystyle{n_{1} -1\choose 2}$. Consider $n_{0}$ defined as in Lemma \ref{estcot}. The following holds.

   \begin{itemize}

    \item  If $r=3$, $r=4$ or  $r= n/2$, then $n_{1}-n_{0}\leq2 .$

     \item  If $5\leq r < n/2$, then $ n_{1}-n_{0}\leq4 .$

   \end{itemize}

\end{lem}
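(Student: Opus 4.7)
The plan is to reduce the question to a single combinatorial comparison and then dispatch the cases according to the size of $D_r := M(n,r-1) - M(n,r)$. From the explicit formulas for $M(n,r)$ one computes $D_3 = D_4 = n-2$ directly, and $D_r = 2(n-2r+4)$ for $r\geq 5$ via the difference-of-squares identity applied to $(n-2r+5)^2-(n-2r+3)^2$; in particular $D_r = 8$ when $n = 2r$ and $r \geq 5$.

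The key observation is that by the minimality of $n_0$ and $n_1$, the inequality $n_1 \leq n_0 + k$ is equivalent to $\binom{n_0 + k - 1}{2} > M(n,r-1) - n$, and since $\binom{n_0-1}{2} > M(n,r)-n$, it is sufficient to verify
\[
\binom{n_0 + k - 1}{2} - \binom{n_0 - 1}{2} = k(n_0 - 1) + \binom{k}{2} \geq D_r,
\]
which in each case reduces to a lower bound on $n_0$.

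For $r = 3$ and $r = 4$, I would compute $n_0$ and $n_1$ directly from $M(n,2) = \binom{n}{2} - 1$ and $M(n,3) = \binom{n-1}{2}$, obtaining $(n_0, n_1) = (n-1, n)$ and $(n-2, n-1)$ respectively (the required integer inequalities hold because $2r \leq n$), so $n_1 - n_0 = 1$ in both subcases. For $r = n/2 \geq 5$ one has $M(n,r) - n = 3r - 5 \geq 10$, which forces $n_0 \geq 7$; hence $k = 2$ works because $2(n_0-1)+1 = 2n_0 - 1 \geq 13 > 8 = D_r$.

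The main case, and the only one not immediately settled by a calculation, is $5 \leq r < n/2$. Write $a := n - 2r + 3 \geq 4$, so that $M(n,r) - n = (a^2 + 3n - 19)/2$ and $D_r = 2(a+1)$; I will take $k = 4$. The critical step, which I expect to be the principal obstacle, is the lower bound $n_0 \geq a + 2$: if on the contrary $n_0 - 1 \leq a$, then $(n_0-1)(n_0-2) \leq a(a-1) = a^2 - a$, which is strictly smaller than $a^2 + 3n - 19$ since $n \geq 11$, contradicting the defining inequality for $n_0$. With $n_0 \geq a + 2$ in hand one computes $4(n_0 - 1) + \binom{4}{2} = 4n_0 + 2 \geq 4a + 10 > 2a + 2 = D_r$, yielding $n_1 \leq n_0 + 4$ as required.
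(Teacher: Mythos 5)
Your proposal is correct, and it takes a genuinely different route from the paper. The paper solves the defining inequalities for $n_{0}$ and $n_{1}$ via the quadratic formula: it introduces $\lambda_{r}$, bounds $n_{0}>\frac{3+\sqrt{9+4\lambda_{r}}}{2}$ and $n_{1}\leq\frac{5+\sqrt{9+4\lambda_{r-1}}}{2}$, rationalizes the difference of square roots to get $n_{1}-n_{0}<\frac{\lambda_{r-1}-\lambda_{r}}{\sqrt{9+4\lambda_{r}}}+1$, and then estimates this quotient case by case. You instead stay entirely inside integer arithmetic: since $n_{1}\leq n$ always exists, $n_{1}\leq n_{0}+k$ follows once $\binom{n_{0}+k-1}{2}>M(n,r-1)-n$, and by the defining inequality $\binom{n_{0}-1}{2}>M(n,r)-n$ it suffices that the increment $k(n_{0}-1)+\binom{k}{2}$ dominates $D_{r}=M(n,r-1)-M(n,r)$; the values $D_{3}=D_{4}=n-2$, $D_{r}=2(n-2r+4)$ for $r\geq5$, together with the lower bound $n_{0}\geq n-2r+5$ (your $n_{0}\geq a+2$) or $n_{0}\geq 7$ when $n=2r$, close the argument. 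I checked your exact determinations $(n_{0},n_{1})=(n-1,n)$ for $r=3$ and $(n-2,n-1)$ for $r=4$ (using $M(n,3)=\binom{n-1}{2}$, $M(n,4)=\binom{n-2}{2}$ and $n\geq 2r$), the identity $M(2r,r)-2r=3r-5$, and the contradiction step giving $n_{0}\geq a+2$; all are correct. What your approach buys is the avoidance of square-root estimates, slightly sharper conclusions in the cases $r=3,4$ (where you get $n_{1}-n_{0}=1$, not merely $\leq 2$), and a reusable reduction ``increment of $\binom{\cdot}{2}$ versus $D_{r}$'' that would also streamline Lemma \ref{finaldiff}. The only phrasing to polish is the claimed ``equivalence'' of $n_{1}\leq n_{0}+k$ with $\binom{n_{0}+k-1}{2}>M(n,r-1)-n$: strictly you only need (and only use) the sufficiency direction, with the trivial remark that if $n_{0}+k>n$ then $n_{1}\leq n<n_{0}+k$ anyway.
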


 \begin{proof}

  If $r=3$, then
  \smallskip

  $$M(n,r-1)<  n+\displaystyle{n_{1} -1\choose 2}\quad\Leftrightarrow \quad n^{2}-3n-4< n_{1}^{2}-3n_{1}.$$\smallskip

  Using the definition of $\lambda_{r}$ in the proof of Lemma \ref{estcot}, we deduce that  the smallest natural number $n_{1}$ verifying the previous inequality satisfies $n_{1}\leq  \dfrac{5+\sqrt{9+4\lambda_{2}}}{2}=:n_{1}' $.\smallskip
 
 If $r=4$, then
  \smallskip
  
  $$M(n,r-1)<  n+\displaystyle{n_{1} -1\choose 2}\quad\Leftrightarrow \quad n^{2}-5n< n_{1}^{2}-3n_{1}.$$\smallskip

  Therefore, the smallest $n_{1}$ verifying the previous inequality satisfies $n_{1}\leq \dfrac{5+\sqrt{9+4\lambda_{3}}}{2}=:n_{1}' $.\smallskip
 
 Note that if $r\geq5$, then
 \smallskip

 $$M(n,r-1)<  n+\displaystyle{n_{1} -1\choose 2}$$
 $$\Leftrightarrow \quad n^{2}+13n-4nr+4r^{2}-20r+4=n^{2}+9n-4n(r-1)+4(r-1)^{2}-12(r-1)-12< n_{0}^{2}-3n_{0}.$$\smallskip


Thus, the smallest $n_{1}$ verifying the previous inequality satisfies $n_{1}\leq \dfrac{5+\sqrt{9+4\lambda_{r-1}}}{2}=:n_{1}' $.\smallskip

Now we estimate the difference between $n_{1}$ and $n_{0}$.\smallskip

$$n_{1}-n_{0}<n_{1}'-(n_{0}'-1)=\dfrac{\sqrt{9+4\lambda_{r-1}}-\sqrt{9+4\lambda_{r}}     }{2}+1 = \dfrac{2(\lambda_{r-1}-\lambda_{r})}{\sqrt{9+4\lambda_{r-1}}+\sqrt{9+4\lambda_{r}} }+1\leq \dfrac{\lambda_{r-1}-\lambda_{r}}{\sqrt{9+4\lambda_{r}} }+1. $$

If $r=3$, then  $n\geq6$ and

$$n_{1}-n_{0}< \dfrac{\lambda_{2}-\lambda_{3}}{\sqrt{9+4\lambda_{3}} }+1= \dfrac{2(n-2)}{\sqrt{9+4\lambda_{3}}}+1<\dfrac{n-2}{\sqrt{\lambda_{3}}}+1.$$

The following holds

$$ \dfrac{n^{2}-4n+4}{n^{2}-5n}=\dfrac{n^{2}-5n+n+4}{n^{2}-5n} <3 \quad  \Rightarrow  \quad \dfrac{n-2}{\sqrt{\lambda_{3}}}<\sqrt{3} .$$

Therefore,

$$n_{1}-n_{0}<\sqrt{3}+1 \quad  \Rightarrow  \quad  n_{1}-n_{0}\leq 2. $$

If $r=4$, then $n\geq8$ and

$$n_{1}-n_{0}< \dfrac{\lambda_{3}-\lambda_{4}}{\sqrt{9+4\lambda_{4}} }+1=  \dfrac{2(n-2)}{\sqrt{9+4\lambda_{4}}}+1<\dfrac{n-2}{\sqrt{\lambda_{4}}}+1.$$

The following holds 

$$ \dfrac{n^{2}-4n+4}{n^{2}-7n+4}=\dfrac{n^{2}-7n+4+3n}{n^{2}-7n+4} \leq3  \quad  \Rightarrow  \quad      \dfrac{n-2}{\sqrt{\lambda_{4}}} \leq \sqrt{3}.$$

Therefore,

$$n_{1}-n_{0}<\sqrt{3}+1 \quad  \Rightarrow  \quad  n_{1}-n_{0}\leq 2. $$

If $r\geq5$, then

$$n_{1}-n_{0}< \dfrac{\lambda_{r-1}-\lambda_{r}}{\sqrt{9+4\lambda_{r}} }+1=\dfrac{4(n-2r+4)}{\sqrt{9+4\lambda_{r}}} +1.$$


Note that

$$\lambda_{r} = (n-2r)^{2}+9(n-2r)+6r-12\geq(n-2r)^{2}+6r-12\geq(n-2r)^{2}+18. $$

If $r< n/2$, then

$$\dfrac{4(n-2r+4)}{\sqrt{9+4\lambda_{r}} }\leq\dfrac{4(n-2r+4)}{\sqrt{81+4(n-2r)^{2}}}<2\dfrac{n-2r}{n-2r}+\dfrac{16}{9}<4.$$

Thus, $n_{1}-n_{0}<5$ and $n_{1}-n_{0}\leq4$.\smallskip

If $r= n/2$, then 

$$ \dfrac{4(n-2r+4)}{\sqrt{9+4\lambda_{r}} } \leq \dfrac{4(n-2r+4)}{\sqrt{81+4(n-2r)^{2}}}=\dfrac{16}{9}<2.$$

Therefore $n_{1}-n_{0}<3$ and $n_{1}-n_{0}\leq2$.
\end{proof}

The following result is a consequence of the two previous lemmas.

\begin{lem} \label{finaldiff}
   Given  integers $n$ and $r$ with    $3\leq r\leq n/2$, let $n_{0}$ be defined as in Lemma \ref{estcot}. Assume $M(n,r-1)> n+\displaystyle{n_{0} -1\choose 2}$. The following holds for $n+\displaystyle{n_{0} -1\choose 2}<m\leq M(n,r-1)$.

    \begin{itemize}

    \item 
     If $r=3$, then $ b_{2}(n,m)-b_{1}(n,m)< 5/4 .$

   \item  If $r=4$ or $r=n/2$, then $ b_{2}(n,m)-b_{1}(n,m)< \sqrt{3n}/4 +1/2.$

    \item If $5\leq r < n/2$, then $ b_{2}(n,m)-b_{1}(n,m)< \sqrt{3n}/4 + 1.$    
     \end{itemize}

  \end{lem}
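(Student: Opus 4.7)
The plan is to reduce Lemma \ref{finaldiff} to a combination of Lemmas \ref{estcot} and \ref{diff} by decomposing $b_{2}(n,m)-b_{1}(n,m)$ into the piece already estimated in Lemma \ref{estcot} plus an additive correction of size $(n_{1}-n_{0})/4$ controlled by Lemma \ref{diff}.

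First I would identify $b_{1}$ and $b_{2}$ on the range of interest. Since
\[
M(n,r) \,<\, n+{n_{0}-1 \choose 2} \,<\, m \,\leq\, M(n,r-1),
\]
Theorem \ref{t:bound2} yields $b_{2}(n,m)=r/2$. For the lower bound, let $n_{1}'$ be the smallest natural number with $m\leq n+\displaystyle{n_{1}'-1 \choose 2}$; Theorem \ref{lowerbound} then gives $b_{1}(n,m)=(n-n_{1}'+3)/4$ (the two small-$m$ regimes of that theorem give at least as much). By the very definition of $n_{1}$ in Lemma \ref{diff} one has $m\leq M(n,r-1)<n+\displaystyle{n_{1}-1\choose 2}$, hence $n_{1}'\leq n_{1}$ and consequently $b_{1}(n,m)\geq (n-n_{1}+3)/4$.

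Next I would apply the one-line identity
\[
b_{2}(n,m)-b_{1}(n,m) \,\leq\, \frac{r}{2}-\frac{n-n_{1}+3}{4} \,=\, \left(\frac{r}{2}-\frac{n-n_{0}+3}{4}\right)+\frac{n_{1}-n_{0}}{4}.
\]
The first summand on the right is exactly the quantity already estimated in Lemma \ref{estcot}, and the second is exactly what Lemma \ref{diff} controls. Plugging in the case-by-case bounds yields: for $r=3$, $b_{2}-b_{1}<3/4+2/4=5/4$; for $r=4$ or $r=n/2$, $b_{2}-b_{1}<\sqrt{3n}/4+2/4=\sqrt{3n}/4+1/2$; for $5\leq r<n/2$, $b_{2}-b_{1}<\sqrt{3n}/4+4/4=\sqrt{3n}/4+1$. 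These are the three claimed inequalities.

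The main (and essentially only) obstacle is careful bookkeeping. One has to notice that the estimates on $r/2-(n-n_{0}+3)/4$ produced by Lemma \ref{estcot} are purely numerical---depending on $n$, $r$, and $n_{0}$ but not on the specific $m$---so they remain valid in the wider $m$-range considered here, where the defining $m$-window for $n_{0}$ has already been left behind. One also has to line up the three case-splittings of Lemmas \ref{estcot} and \ref{diff} (both of which treat $r=3$, $r=4$ or $r=n/2$, and $5\leq r<n/2$ separately), and to check that the hypothesis $M(n,r-1)>n+\displaystyle{n_{0}-1\choose 2}$ is precisely what makes the $m$-range in the statement nonempty in each case.
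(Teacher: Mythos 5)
Your proposal is correct and follows essentially the same route as the paper: the authors likewise take $b_{2}(n,m)=r/2$ from Theorem \ref{t:bound2}, bound $b_{1}(n,m)\geq (n-n_{1}+3)/4$ via Theorem \ref{lowerbound0} using $m\leq M(n,r-1)<n+\binom{n_{1}-1}{2}$, and split $b_{2}-b_{1}$ into $\bigl(r/2-(n-n_{0}+3)/4\bigr)+(n_{1}-n_{0})/4$, bounding the two pieces by Lemmas \ref{estcot} and \ref{diff} respectively. Your explicit remark that the Lemma \ref{estcot} estimate is really a bound on $r/2-(n-n_{0}+3)/4$, independent of $m$, and hence valid beyond that lemma's stated $m$-window, is exactly the (tacit) point on which the paper's application of Lemma \ref{estcot} rests.
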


\begin{proof}


Let $n_{1}$ be defined as in Lemma \ref{diff}. 



On the other hand, $m\leq  M(n,r-1) <n+ \displaystyle{n_{1} -1\choose 2}$ and Theorem \ref{lowerbound0} gives $ b_{1}(n,m)\geq(n-n_{1}+3)/4$.

On the other hand, $M(n,r) < n+ \displaystyle{n_{0} -1\choose 2}< m\leq M(n,r-1)$ and Theorem \ref{t:bound2} gives $b_{2}(n,m)= r/2$.

The following holds

$$ b_{2}(n,m)-b_{1}(n,m) =b_{2}(n,m)-\dfrac{n-n_{0}+3}{4} + \dfrac{n-n_{0}+3}{4} - b_{1}(n,m).$$

Notice that 

$$\dfrac{n-n_{0}+3}{4} - b_{1}(n,m)\leq\dfrac{n-n_{0}+3}{4}-\dfrac{n-n_{1}+3}{4}=\dfrac{n_{1}-n_{0}}{4}.$$

Then, applying Lemmas  \ref{estcot} and \ref{diff}, in order to bound $b_{2}(n,m)-(n-n_{0}+3)/4$ and $n_{1}-n_{0}$, respectively, we obtain the desired upper bounds.
\end{proof}

Lemmas \ref{estcot} and \ref{finaldiff} have the following consequence.

\begin{thm} \label{t:final}
The following holds for all $n\geq 3$.

     \begin{equation} \label{eq:1}
      b_{2}(n,m)-b_{1}(n,m)< \frac{\sqrt{3n}}{4}+1.
     \end{equation}

  \end{thm}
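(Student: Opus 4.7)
The plan is to observe that Theorem \ref{t:final} is essentially a unifying statement: once Lemmas \ref{estcot} and \ref{finaldiff} have been established, all that remains is to check that each of the individual estimates they produce fits under the single roof $\sqrt{3n}/4+1$.

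First I would deal with the degenerate values of $m$. By Theorems \ref{t:bound2} and \ref{lowerbound}, for $m=n-1$ or $n\le m\le n+3$ the value of $B(n,m)$ is known exactly, so $b_{1}(n,m)=b_{2}(n,m)$ and \eqref{eq:1} holds trivially. For the remaining $m$, using that $M(n,r)$ is strictly decreasing in $r$ on $\{1,2,\dots,\lfloor n/2\rfloor\}$ (this is the monotonicity exploited inside the proof of Theorem \ref{t:effdiam}) and that $M(n,1)=n(n-1)/2$, there is a unique $r\in\{2,\dots,\lfloor n/2\rfloor\}$ with $M(n,r)<m\le M(n,r-1)$, which pins down $b_{2}(n,m)=r/2$ via Theorem \ref{t:bound2}.

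Having fixed this $r$, I would let $n_{0}$ be the integer from Lemma \ref{estcot} and split the interval $(M(n,r),M(n,r-1)]$ at the threshold $n+\binom{n_{0}-1}{2}$. The part $\bigl(M(n,r),\,n+\binom{n_{0}-1}{2}\bigr]$ is handled by Lemma \ref{estcot}, and the part $\bigl(n+\binom{n_{0}-1}{2},\,M(n,r-1)\bigr]$, when nonempty, by Lemma \ref{finaldiff}. From Lemma \ref{estcot} the possible values of $b_{2}-b_{1}$ are $0$, $3/4$, and a quantity strictly below $\sqrt{3n}/4$, each plainly below $\sqrt{3n}/4+1$. From Lemma \ref{finaldiff} the possible values are strictly below $5/4$, $\sqrt{3n}/4+1/2$, and $\sqrt{3n}/4+1$; the first lies below $\sqrt{3n}/4+1$ because $n\ge 3$ gives $\sqrt{3n}\ge 3>1$, the second is immediate, and the third is precisely where the final target is reached, so one must transfer the strict inequality from Lemma \ref{finaldiff} without relaxing it.

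I expect no real obstacle at this stage: the hard combinatorial work (solving the KKT system in Lemma \ref{l:lemma 1}, estimating $n_{1}-n_{0}$ in Lemma \ref{diff}, and performing the nested case analysis in $r$) has already been done upstream, so Theorem \ref{t:final} is a genuine \emph{amalgamation} rather than a new computation. The only point requiring a little care is the subcase $5\le r<n/2$ of Lemma \ref{finaldiff}, where the final bound is saturated and the strict sign $<$ must be preserved through the case analysis rather than weakened to $\le$.
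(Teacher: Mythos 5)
There is a genuine gap, and it sits exactly where your reduction begins. You claim that, after discarding $m\le n+3$, every remaining $m$ admits a (unique) $r\in\{2,\dots,\lfloor n/2\rfloor\}$ with $M(n,r)<m\le M(n,r-1)$, so that Theorem \ref{t:bound2} pins down $b_{2}(n,m)=r/2$. Existence fails: the intervals $(M(n,r),M(n,r-1)]$ with $2\le r\le \lfloor n/2\rfloor$ only cover $\bigl(M(n,\lfloor n/2\rfloor),\binom{n}{2}\bigr]$, and $M(n,\lfloor n/2\rfloor)$ is of order $5n/2$ (for instance $M(n,n/2)=\tfrac{5n-10}{2}$ when $n$ is even and $n/2\ge 4$; e.g.\ $n=10$ gives $M(10,5)=20$), which is well above $n+3$ for all but very small $n$. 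Hence the whole range $n+3<m\le M(n,\lfloor n/2\rfloor)$ is left uncovered by your case split, and in that range Theorem \ref{t:bound2} provides no bound of the form $r/2$ with $r\le n/2$; the only available upper bound is the universal $b_{2}(n,m)\le n/4$.

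This is not a cosmetic omission: the paper devotes the final third of its proof precisely to that range. There one takes $n_{0}$ as in Lemma \ref{estcot} for $r=\lfloor n/2\rfloor$, notes that $m\le M(n,\lfloor n/2\rfloor)<n+\binom{n_{0}-1}{2}$ together with $m>n$ lets Theorem \ref{lowerbound0} give $b_{1}(n,m)\ge (n-n_{0}+3)/4$, and then bounds
$$ b_{2}(n,m)-b_{1}(n,m)\le \frac{n}{4}-\frac{n-n_{0}+3}{4}<\frac{\lfloor n/2\rfloor}{2}-\frac{n-n_{0}+3}{4}+\frac{1}{2}<\frac{\sqrt{3n}}{4}+\frac{1}{2}, $$
where the last step reuses the numerical estimate established inside Lemma \ref{estcot}. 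Your treatment of the covered range, splitting $(M(n,r),M(n,r-1)]$ at $n+\binom{n_{0}-1}{2}$ and invoking Lemmas \ref{estcot} and \ref{finaldiff}, does match the paper; but without the extra argument for $n+3<m\le M(n,\lfloor n/2\rfloor)$ the proof of Theorem \ref{t:final} is incomplete.
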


\begin{proof}  
    
    If $m>M(n,3)$, then $b_{2}(n,m)\leq 3/2$ by Theorem
    \ref{t:bound1}, and 
    $$    b_{2}(n,m)-b_{1}(n,m)\leq  b_{2}(n,m)\leq \dfrac{3}{2}< \dfrac{3}{4}+1\leq \dfrac{\sqrt{3n}}{4} +1.      $$

  Consider now $r\geq3$ and $n_{0}$ defined as in Lemma \ref{estcot}. If $M(n,r)<m \leq n+\displaystyle{n_{0} -1\choose 2}$ or $M(n,r-1)<m \leq n+\displaystyle{n_{1} -1\choose 2}$, then Lemma  \ref{estcot} gives

       \begin{equation} \label{eq:2}
       b_{2}(n,m)-b_{1}(n,m)  < \dfrac{\sqrt{3n}}{4}.
       \end{equation}

  If  $M(n,r-1)\leq n+\displaystyle{n_{0} -1\choose 2}$, then  equation \ref{eq:2} holds for $M(n,r)<m \leq M(n,r-1)$.\smallskip

  If  $n+\displaystyle{n_{0} -1\choose 2}< M(n,r-1)$ and $n+\displaystyle{n_{0} -1\choose 2}<m\leq M(n,r-1)$, then Lemma   \ref{finaldiff} implies \ref{eq:1}. Thus, equation \ref{eq:1} holds for $M(n,r)<m \leq M(n,r-1)$.\smallskip
  
  Hence, \ref{eq:2} holds for every $m>M(n,\lfloor n/2 \rfloor)$.\smallskip
  
  Finally, assume that $n+3<m\leq M(n,\lfloor n/2 \rfloor) $.\smallskip
  
  First, note that if $M(n,\lfloor n/2 \rfloor)<m\leq \text{min} \hspace{0.7mm}  \Big\{ n+\displaystyle{n_{0} -1\choose 2},M(n,\lfloor n/2 \rfloor -1)  \Big\},$
  then Lemma \ref{estcot} implies

    $$    b_{2}(n,m)-b_{1}(n,m)= \dfrac{\lfloor \frac{n}{2} \rfloor}{2} -\dfrac{n-n_{0}+3}{4}   <\dfrac{\sqrt{3n}}{4} .      $$  
                
    Consider now $m\leq M(n,\lfloor n/2 \rfloor)$, then
    $$ b_{2}(n,m)-b_{1}(n,m)\leq \frac{n}{4}   -\dfrac{n-n_{0}+3}{4} <    \dfrac{2(\lfloor \frac{n}{2} \rfloor +1) }{4} -\dfrac{n-n_{0}+3}{4} =  \dfrac{ \lfloor \frac{n}{2} \rfloor }{2} -\dfrac{n-n_{0}+3}{4} +\frac{1}{2} <\dfrac{\sqrt{3n}}{4} +\frac{1}{2} .      $$

    Hence, \ref{eq:2} holds for every $m\leq M(n,\lfloor n/2 \rfloor)$.
 
  \end{proof}

\section{Computation of  $A(n,m)$}

 Denote by $\Gamma_{3}$ the set of graphs such that every cycle has length $3$ and every edge belongs to some cycle.
 \begin{prop}\label{p:triangle}
   Consider a graph $G\in  \mathcal{G}(n,m)\cap \Gamma_{3} $. If $k$ denotes the number of cycles of $G$, then $n=2k+1$ and $m=3k$.
 \end{prop}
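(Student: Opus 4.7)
The plan is to first establish the key structural fact that any two distinct triangles in $G$ are edge-disjoint. If triangles $T_1=\{a,b,c\}$ and $T_2=\{a,b,d\}$ shared the edge $[a,b]$, then $a\to c\to b\to d\to a$ would be a simple $4$-cycle in $G$, contradicting the hypothesis that every cycle of $G$ has length $3$. (The vertices $a,b,c,d$ are distinct because otherwise $T_1=T_2$, and the four edges involved are distinct since they join different pairs.)

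Given edge-disjointness, the assumption that every edge of $G$ lies in some triangle yields the disjoint union $E(G)=\bigsqcup_{i=1}^{k}E(T_i)$, and therefore $m=3k$.

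For the vertex count I would argue by induction on $k$. The base case $k=1$ is immediate: $G$ is a single triangle, so $n=3=2\cdot 1+1$. For the inductive step $k\geq 2$, the canonical $T$-decomposition of $G$ consists precisely of the $k$ triangles, since each triangle is two-connected and by edge-disjointness no two triangles can lie in a common two-connected subgraph. The resulting block--cut tree has at least two block-nodes, hence possesses a leaf block, that is, a pendant triangle $T$ that shares exactly one vertex $v$ with the rest of $G$. Let $u_1,u_2$ denote the two vertices of $T$ other than $v$; deleting $u_1,u_2$ (together with the three edges of $T$) yields a connected subgraph $G'\in\mathcal{G}(n-2,m-3)\cap\Gamma_3$ having exactly $k-1$ triangles, because every remaining triangle is preserved and every remaining edge still belongs to its own (untouched) triangle. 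The induction hypothesis gives $n-2=2(k-1)+1$, whence $n=2k+1$.

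The main obstacle is the edge-disjointness step, which is the linchpin of the whole argument: it converts the two global hypotheses (\emph{every cycle is a triangle} and \emph{every edge lies in a cycle}) into a clean decomposition of $E(G)$, from which the edge count is immediate and the inductive reduction to a smaller member of $\Gamma_3$ becomes routine. (As an alternative to induction for the vertex count, one could invoke the cyclomatic identity $m-n+1=\dim Z(G)$: edge-disjointness shows the $k$ triangles are linearly independent in the cycle space $Z(G)$ over $\mathbb{F}_2$, while the hypothesis that every simple cycle is a triangle shows they span, so $k=m-n+1=3k-n+1$, giving $n=2k+1$.)
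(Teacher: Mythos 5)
Your argument is correct, and its skeleton is the same as the paper's: induction on the number $k$ of cycles, with the base case a single triangle and the inductive step removing (in the paper, adding) two vertices and three edges. The difference is that the paper simply asserts that every $G$ with $k$ cycles arises from some $G_0$ with $k-1$ cycles by adding $2$ vertices and $3$ edges, whereas you actually justify this step: the observation that two triangles sharing an edge would create a $4$-cycle gives pairwise edge-disjointness, which both yields $m=3k$ directly from the partition $E(G)=\bigsqcup_i E(T_i)$ (no induction needed for the edge count) and, via the block--cut tree, produces a leaf block, i.e.\ a pendant triangle whose two non-cut vertices can be deleted to stay inside $\Gamma_3$. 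The only point left implicit is why no two distinct triangles can lie in a common two-connected subgraph; edge-disjointness alone is not quite enough, and one should invoke the standard fact that a two-connected graph on at least four vertices contains a cycle of length at least $4$ (or that any two edges of a two-connected graph lie on a common cycle), which under the hypothesis of $\Gamma_3$ forces each block to be a single triangle. Your parenthetical cycle-space argument is a genuinely different and arguably cleaner route to $n=2k+1$: edge-disjointness gives linear independence of the $k$ triangles in the cycle space over $\mathbb{F}_2$, the hypothesis that all cycles are triangles gives spanning, and $k=m-n+1=3k-n+1$ finishes; this avoids the block--cut tree entirely and could replace the induction altogether.
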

\begin{proof}
Let us prove the result by induction on $k$.\smallskip

If $k=1$, then $G$ is isomorphic to $C_{3}$ and $n=m=3$. \smallskip

Assume that the statement holds for every graph $G_{0}$ with $k-1$ cycles. Then $G_{0}$ has $n_{0}=2(k-1)+1$ vertices and $m_{0}=3(k-1)$ edges. Any graph $G$ with $k$ cycles can be  obtained by adding $2$ vertices and $3$ edges to some graph $G_{0}$ with $k-1$ cycles, that is, $n=n_{0}+2=2k+1$ and $m=m_{0}+3=3k$.
\end{proof}

We say that an edge $g$ of a graph $G$ is a \emph{cut-edge} if
$G \setminus \{g\}$ is not connected.
Given a graph $G$, the \emph{T-edge-decomposition} of $G$ is a \emph{T-decomposition} such that each component $G_{s}$ is either a \emph{cut-edge} or it does not contain cut-edges.\smallskip

 \begin{prop}\label{p:c.triangles}
Let $G\in  \mathcal{G}(n,m) $ be a graph such that every cycle has length $3$. Then $2m\leq 3n-3$.
 \end{prop}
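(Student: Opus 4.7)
The plan is to use the T-edge-decomposition $\{G_s\}_{s=1}^N$ of $G$, establish the inequality $2m_s\leq 3n_s-3$ on each component $G_s$ separately (where $n_s:=|V(G_s)|$ and $m_s:=|E(G_s)|$), and then assemble these bounds by induction on $N$ via the block--cut-vertex tree of the decomposition.

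\textbf{Per-component bound.} By the definition of the T-edge-decomposition, each $G_s$ is either a single cut-edge or a connected subgraph with no cut-edges, i.e., $2$-edge-connected. In the cut-edge case $n_s=2$, $m_s=1$ and $2m_s=2<3=3n_s-3$. In the $2$-edge-connected case every edge of $G_s$ lies in some cycle of $G_s$; any such cycle is also a cycle of $G$, hence has length $3$ by hypothesis. Therefore $G_s\in \Gamma_3$, and Proposition \ref{p:triangle} yields $n_s=2k_s+1$ and $m_s=3k_s$ for some $k_s\geq 1$, so $2m_s=3n_s-3$. In both cases $2m_s\leq 3n_s-3$.

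\textbf{Aggregation by induction on $N$.} The base case $N=1$ is exactly the per-component bound. For $N\geq 2$, the block--cut-vertex tree of the T-edge-decomposition (nodes: the components $G_s$ together with the cut-vertices of $G$, with an edge joining $G_s$ to each cut-vertex of $G$ contained in $G_s$) is a tree because $G$ is connected and each edge lies in exactly one component; in particular every leaf of this tree is a block. Choose a leaf block $G_{s_0}$ meeting the rest of $G$ in a single cut-vertex $v$, and set $G':=G\setminus(V(G_{s_0})\setminus\{v\})$. Then $G'$ is a connected graph with $n'=n-n_{s_0}+1$ vertices and $m'=m-m_{s_0}$ edges, still has every cycle of length $3$, and its T-edge-decomposition consists of the remaining $N-1$ components. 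Applying the induction hypothesis to $G'$ together with the per-component bound for $G_{s_0}$,
\begin{equation*}
2m \;=\; 2m'+2m_{s_0} \;\leq\; (3n'-3)+(3n_{s_0}-3) \;=\; 3(n'+n_{s_0})-6 \;=\; 3(n+1)-6 \;=\; 3n-3.
\end{equation*}

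\textbf{Main obstacle.} The arithmetic is routine once Proposition \ref{p:triangle} is applied per component; the only subtle point is the existence of a leaf block when $N\geq 2$, which rests on the fact that the block--cut-vertex structure of the T-edge-decomposition really is a tree. I would either cite this or give a quick argument using connectedness of $G$ and the fact that the components partition $E(G)$.
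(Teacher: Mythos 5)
Your proposal is correct and takes essentially the same route as the paper: decompose $G$ by its T-edge-decomposition, apply Proposition \ref{p:triangle} to each component lying in $\Gamma_{3}$, and account separately for the cut-edges. The only difference is the final bookkeeping --- the paper aggregates through the counting identity $2m=3n-m_{0}-3$ (with $m_{0}$ the number of cut-edge components, using $m_{0}=n_{0}+r-1$), whereas you peel off a leaf block by induction; both rest on the same tree structure of the decomposition, and the leaf-block fact you flag is the standard point to cite.
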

 
\begin{proof}
 The canonical T-edge-decomposition  of $G$ has  $r\geq 1$ graphs $\{G_{1},...,G_{r}\}$ in $\Gamma_{3}$ and $s\geq 0$ edges $\{G_{r+1},...,G_{r+s}\}$.
 For each component $G_{i} \in \Gamma_{3}$ we have, by \ref{p:triangle},
$$ n_{i}=2k_{i}+1,\hspace{0.2cm}  m_{i}=3k_{i},\hspace{0.2cm} 1\leq i \leq r, $$
where $n_{i}$, $m_{i}$ and $k_{i}$  denote the number of vertices, edges and cycles in $G_{i}$, respectively.\smallskip

Let us denote by $ k= \Sigma_{i=1}^{r}k_{i}$ the number of cycles of $G$. Let $n_{0}$ and $m_{0}$ be the number of vertices and edges we add to complete $G$, i.e, $ n_{0}= n-\Sigma_{i=1}^{r}n_{i}$, $ m_{0}= m-\Sigma_{i=1}^{r}m_{i}$. Then we have
$$ n= \Sigma_{i=0}^{r}n_{i}=n_{0}+\Sigma_{i=1}^{r}(2k_{i}+1)=n_{0}+2k+r,$$
$$ m= \Sigma_{i=0}^{r}m_{i}=m_{0}+\Sigma_{i=1}^{r}(3k_{i})=m_{0}+3k.$$

Hence,
$$n=n_{0}+2\,\frac{m-m_{0}}{3}+r.$$

One can check that if $n_{0}=0$, then $m_{0}=r-1$ and if $n_{0}\geq 1$, then $m_{0}=n_{0}+r-1$. Therefore,
$$n=n_{0}+2\,\frac{m-(n_{0}+r-1)}{3}+r \hspace{0.2cm}\Rightarrow \hspace{0.2cm} 2m=3n-n_{0}-r-2\hspace{0.2cm}\Rightarrow \hspace{0.2cm} 2m=3n-m_{0}-3.$$

Then $2m\leq 3n-3$.
\end{proof}

The next result appears in \cite{MRSV}.
\begin{thm}
 \label{t:delta2} 
 Let $G$ be any graph.

\begin{itemize}

\item $\d(G)< 1/4$ if and only if $G$ is a tree.



\item $\d(G)< 1$ if and only if every cycle $g$ in $G$ has
length $L(g) \le 3$.

\end{itemize}
Furthermore, if $\d(G)< 1$, then $\d(G)\in \{0,3/4\}$.
\end{thm}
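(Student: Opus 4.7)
The plan is to prove the three assertions simultaneously by establishing a sharp trichotomy for the possible range of $\delta(G)$: either $\delta(G)=0$ (tree case), or $\delta(G)=3/4$ (all cycles are triangles), or $\delta(G)\ge 1$ (some cycle has length $\ge 4$). All three bullets then follow: the first bullet holds because $\delta(G)<1/4$ excludes the last two cases, the second bullet holds because $\delta(G)<1$ excludes only the last case, and the ``furthermore'' clause is immediate from the trichotomy.

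The first reduction I would make is to the canonical T-decomposition $\{G_s\}$ of $G$. By Lemma \ref{l:bermu}, $\delta(G)=\sup_s\delta(G_s)$, and since every cycle of $G$ is contained in a single two-connected block $G_s$, the hypothesis ``every cycle has length $\le 3$'' transfers block-by-block. Each $G_s$ is either a bridge (with $\delta(G_s)=0$) or a two-connected graph, so it suffices to classify $\delta(H)$ for two-connected $H$.

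The easy computations are the two base cases. For a single edge or a tree, geodesic triangles degenerate through a common median point and $\delta=0$. For $H=K_3$, a direct inspection of the geodesic bigon joining a vertex to the midpoint of the opposite edge shows $\delta(K_3)=3/4$; the two geodesic arcs have length $3/2$, and the midpoints of these arcs realize the value $3/4$.

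The main obstacle is the remaining case: a two-connected graph $H$ which is neither $K_2$ nor $K_3$ must satisfy $\delta(H)\ge 1$. The strategy is to exhibit an isometric cycle of length $\ge 4$ in $H$. Since $H$ is two-connected with more than three vertices, it contains some cycle of length $\ge 4$; I would take a shortest such cycle $C$ of length $k\ge 4$ and argue that $C$ is isometric in $H$. Otherwise, some pair of vertices $x,y\in C$ would satisfy $d_H(x,y)<d_C(x,y)$, so a shortest $H$-path between them, combined with the shorter arc of $C$ through $x$ and $y$, would produce either a cycle of length $\ge 4$ strictly shorter than $C$ (contradicting minimality) or a cycle of length $3$ whose third vertex permits us to iterate the argument with a modified $C$ of the same length; tracking this process carefully leads to an isometric $C$. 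Once $C$ is isometric, pick $x,y\in C$ with $d_C(x,y)=\lfloor k/2\rfloor$; the two arcs of $C$ are geodesics in $H$, forming a bigon, and the midpoint of one arc lies at distance $k/4\ge 1$ from the other. This yields $\delta(H)\ge 1$, and pulling back through the T-decomposition gives $\delta(G)\ge 1$. The delicate combinatorial step above—ensuring that minimality of $C$ really forces isometry—is the technical heart of the argument and the place where the two-connectedness hypothesis is used most subtly.
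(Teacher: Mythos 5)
Your overall architecture (reduce to two-connected blocks via the canonical T-decomposition and Lemma \ref{l:bermu}, compute $\delta=0$ for bridges/trees and $\delta(K_3)=3/4$, then show every other two-connected block has $\delta\ge 1$) is reasonable, and the first two steps are fine (for $\delta(K_3)\le 3/4$ you should invoke Theorem \ref{t:diam-delta} with $\diam K_3=3/2$ rather than ``direct inspection'' of one bigon, which only gives the lower bound). Note also that the paper itself offers no proof to compare with: Theorem \ref{t:delta2} is quoted from \cite{MRSV}, so your attempt has to stand on its own.

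It does not, because the technical heart of your argument is false. You claim that a two-connected graph $H\neq K_2,K_3$ contains an \emph{isometric} cycle of length $\ge 4$, obtained by taking a shortest cycle of length $\ge 4$ and iterating a shortcut argument. Take $H=K_4$: the only cycles are triangles and $4$-cycles, and every $4$-cycle in $K_4$ has both chords, so its two diagonal vertex pairs are at distance $1$ in $K_4$ but at distance $2$ along the cycle. Hence $K_4$ contains \emph{no} isometric cycle of length $\ge 4$ at all, and your iteration can never terminate (each shortcut produces a triangle and another chorded $4$-cycle of the same length). Yet $\delta(K_4)=1$, so the conclusion is true but not obtainable by your method; the same obstruction appears in any graph in which every long cycle is heavily chorded by triangles (e.g.\ $K_n$, $n\ge 4$). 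To close the gap you need a different witness for $\delta\ge 1$ when no isometric long cycle exists, for instance a geodesic bigon between midpoints of two disjoint edges: in $K_4$, the midpoints $x$ of $[a,b]$ and $y$ of $[c,d]$ satisfy $d(x,y)=2$, the two geodesics through $a,c$ and through $b,d$ form a bigon, and the midpoint of one lies at distance $1$ from the other, giving $\delta(K_4)\ge 1$. A correct proof of the second bullet must handle cycles of length $\ge 4$ with arbitrary chords by some such construction (this is essentially what \cite{MRSV} does), not by reducing to isometric cycles.
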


\begin{prop}\label{precise-val}
 If $m\geq n$ and $2m\leq 3n-3$, then $A(n,m)=3/4$.
\end{prop}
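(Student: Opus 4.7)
The plan is to prove both inequalities $A(n,m)\ge 3/4$ and $A(n,m)\le 3/4$, and then to construct an explicit graph realizing the minimum.

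For the lower bound, I would argue as follows. Let $G\in \mathcal{G}(n,m)$ be arbitrary. Since $m\ge n>n-1$, the graph $G$ is not a tree. By the first item of Theorem \ref{t:delta2}, $\delta(G)\ge 1/4$. The ``Furthermore'' clause of Theorem \ref{t:delta2} tells us that the set of possible values of $\delta(G)$ below $1$ is $\{0,3/4\}$; combining these, either $\delta(G)=3/4$ or $\delta(G)\ge 1$. In every case $\delta(G)\ge 3/4$, hence $A(n,m)\ge 3/4$.

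For the upper bound, I would exhibit a concrete $G\in\mathcal{G}(n,m)$ in which every cycle has length $3$, so that by the second item of Theorem \ref{t:delta2} we have $\delta(G)<1$ and (since $G$ is not a tree) $\delta(G)=3/4$. The natural candidate is a \emph{friendship graph} together with a pendant path: let $k:=m-n+1$ (so $k\ge 1$ by $m\ge n$) and consider the graph $F_k$ obtained by gluing $k$ triangles at a single common vertex. By Proposition \ref{p:triangle}, $F_k\in\Gamma_3$ has exactly $2k+1$ vertices and $3k$ edges. Now attach to any vertex of $F_k$ a simple path on $n_0:=3n-2m-3$ additional vertices (so contributing $n_0$ new edges), which is allowed because $n_0\ge 0$ by the hypothesis $2m\le 3n-3$. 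The resulting graph $G$ has exactly
\[
(2k+1)+n_0=2(m-n+1)+1+(3n-2m-3)=n
\]
vertices and $3k+n_0=3(m-n+1)+(3n-2m-3)=m$ edges, so $G\in\mathcal{G}(n,m)$. Moreover the only cycles of $G$ are the $k$ triangles of $F_k$, because attaching a tree (the pendant path) to a connected graph creates no new cycles.

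Putting the two bounds together yields $A(n,m)=3/4$. The only step that requires genuine thought is the construction in the upper bound and the verification that the vertex and edge counts match the prescribed $n$ and $m$; the two dimensional constraints $m\ge n$ and $2m\le 3n-3$ are sharp precisely in that they correspond, respectively, to $k\ge 1$ (existence of at least one triangle, ensuring $\delta(G)>0$) and $n_0\ge 0$ (existence of enough ``spare'' vertices to attach as a tree), which is consistent with the necessary condition $2m\le 3n-3$ derived in Proposition \ref{p:c.triangles}. No further obstacles arise.
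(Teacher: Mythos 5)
Your proposal is correct and follows essentially the same route as the paper: the lower bound comes from Theorem \ref{t:delta2} exactly as in the paper, and the upper bound is realized by a graph whose only cycles are $k=m-n+1$ triangles sharing a hub with a tree part accounting for the remaining $n_0=3n-2m-3$ edges. The only cosmetic difference is that the paper attaches $n_0$ pendant edges at the hub vertex rather than a pendant path, which changes nothing in the vertex/edge counts or in the conclusion $\delta(G)=3/4$.
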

\begin{rem}
Note that $n\leq m\leq (3n-3) /2$ implies $n\geq 3$.
\end{rem}
\begin{proof}
Since $m\geq n\geq 3$, if $G\in  \mathcal{G}(n,m)$, then $G$ is not a tree. Hence Theorem \ref{t:delta2}
gives $\delta(G)\geq 3/4 $ and $A(n,m)\geq 3/4 $.\smallskip

Fix $n,m$ verifying the hypotheses. Define $n_{0}:=m_{0}:=3n-3-2m$ and $k:=m+1-n$. Then 
 $$n=2k+1+n_{0},\qquad  m=3k+n_{0}.$$
 
 Let us consider $k$ graphs $G_{1},\dots,G_{k}$ isomorphic to $C_{3}$ and $n_{0}$ graphs $\Gamma_{1},\dots,\Gamma_{n_{0}}$ isomorphic to $P_{2}$. Fix vertices $v_{1}\in V(G_{1}),\dots,v_{k}\in V(G_{k}),w_{1}\in V(\Gamma_{1}),\dots,w_{n_{0}}\in V(\Gamma_{n_{0}})$ and consider the grah $G$ obtained from $G_{1},\dots,G_{k},\Gamma_{1},\dots,\Gamma_{n_{0}}$ by identifying $v_{1},\dots,v_{k},w_{1},\dots,w_{n_{0}}$ in a single vertex. Then $G\in  \mathcal{G}(n,m)$ and $\delta (G)=3/4 $. Therefore, $A(n,m)\leq 3/4 $ and we conclude $A(n,m)=3/4 $.
\end{proof}

\noindent

\begin{defn}\label{def:c.Kn}
 Let $K_{n}$ be the complete graph with $n$ vertices and consider the numbers $N_{i}$, $i=1,\dots, s$, $(s\geq 1)$ such that $2\leq N_{1}, \dots, N_{s}< n$, $ N_{1}+ \dots+ N_{s}\leq n $. Choose sets of vertices $V_{1}, \dots, V_{s} \subset V(K_{n})$ with $V_{i} \cap  V_{j}= \emptyset$ if  $i\neq j$ and $\#V_{i}=N_{i}$ for $i=1,\dots, s$. Let $K_{n}^{N_{1},\dots, N_{s}}$ be the graph obtained from $K_{n}$ by removing the edges joining any two vertices in $V_{i}$ for every $i=1,\dots, s$.
\end{defn}

\begin{lem}\label{l:c.Kn1}
We always have $  \delta(K_{n}^{N_{1},\dots, N_{s}} ) \leq 1 $. 
\end{lem}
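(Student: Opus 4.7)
The plan is to show that $\diam G \le 2$ and then invoke Theorem~\ref{t:diam-delta}, which immediately gives $\delta(G) \le \tfrac{1}{2}\diam G \le 1$. The whole argument reduces to the following structural observation about $G = K_n^{N_1,\dots,N_s}$: by the very definition of $V_1,\dots,V_s$, two distinct vertices $x,y \in V(G)$ fail to be adjacent if and only if they lie in a common $V_i$.

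First I would bound distances between vertices. If $u,v$ are distinct non-adjacent vertices of $G$, they lie in some $V_i$; since $N_i<n$, there exists $w \in V(G)\setminus V_i$, and $[u,w],[w,v]\in E(G)$ by construction, so $d_G(u,v)=2$. Hence $\diam V(G)\le 2$.

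Next comes the extension to interior points. The key fact is that the endpoints of any edge of $G$ cannot both lie in the same $V_i$ (otherwise that edge would have been removed). For two vertex-disjoint edges $[u_1,u_2]$ and $[v_1,v_2]$, set $d_{ij}:=d_G(u_i,v_j) \in \{1,2\}$; then no row and no column of the $2\times 2$ matrix $(d_{ij})$ can consist entirely of $2$'s, because $d_{i1}=d_{i2}=2$ would force $\{u_i,v_1,v_2\}\subseteq V_k$ for some $k$, contradicting $[v_1,v_2]\in E(G)$, and symmetrically for columns. Hence at least one of the two diagonals $\{d_{11},d_{22}\}$ or $\{d_{12},d_{21}\}$ consists entirely of $1$'s. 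For $p\in[u_1,u_2]$ at distance $t$ from $u_1$ and $q\in[v_1,v_2]$ at distance $s$ from $v_1$, the two paths $p\to u_i\to v_j\to q$ corresponding to an all-ones diagonal have lengths summing to exactly $4$ (for instance $(t+1+s)+((1-t)+1+(1-s))=4$), so their minimum is at most $2$, giving $d_G(p,q)\le 2$. The remaining cases---$p,q$ on the same edge, edges sharing a vertex, or one of $p,q$ being a vertex---follow from entirely analogous two-path estimates, together with the observation that $d_G(x,v_1)=d_G(x,v_2)=2$ for a vertex $x\notin\{v_1,v_2\}$ is ruled out by the same disjointness argument. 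This yields $\diam G\le 2$ and the conclusion.

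The main obstacle is the case analysis for interior points: a naive averaging over all four routes $p\to u_i\to v_j\to q$ yields only $d_G(p,q)\le 5/2$, so one genuinely needs the structural fact that the $2$'s in $(d_{ij})$ cannot share a row or column, which forces the existence of a pair of routes with total length exactly $4$. Once this combinatorial observation is in place, the rest of the argument is a short exercise in the triangle inequality.
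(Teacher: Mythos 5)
Your proof is correct, and its skeleton is the same as the paper's: bound the diameter of the whole metric space $K_{n}^{N_{1},\dots,N_{s}}$ (not just of its vertex set) by $2$ and invoke Theorem \ref{t:diam-delta}. The difference is in how $\diam G\le 2$ is verified off the vertices. The paper only estimates distances from midpoints of edges to vertices (obtaining $3/2$, in two cases according to whether the edge meets some $V_i$ or not) and then concludes, relying on the reduction ``it suffices to check midpoints,'' i.e.\ on the standard fact in this literature that the diameter of a metric graph is controlled by distances from midpoints of edges; that reduction is used without proof. You instead treat arbitrary pairs of interior points directly: your observation that in the $2\times 2$ matrix of endpoint distances between two vertex-disjoint edges no row or column can consist of $2$'s (by disjointness of the $V_i$'s and the fact that the edge $[v_{1},v_{2}]$ survived), hence some diagonal is all $1$'s and the two corresponding routes have total length exactly $4$, is a clean, self-contained substitute for that reduction, and your remaining cases (same edge, edges sharing a vertex, vertex versus interior point) are handled correctly by the same disjointness argument. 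In short: the paper's verification is shorter granted the midpoint reduction; yours is slightly longer but entirely elementary and does not appeal to any unstated auxiliary fact.
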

\begin{proof}
Fist of all, note that $\diam V(K_{n}^{N_{1},\dots, N_{s}})=2$. Hence, in order to prove $\diam (K_{n}^{N_{1},\dots, N_{s}})=2$, it suffices to check that $d(x,y)\leq 2$ for every midpoint $x$ of any edge in $E(K_{n}^{N_{1},\dots, N_{s}})$ and every $y\in K_{n}^{N_{1},\dots, N_{s}} $. \smallskip

Fix $i \in \{1,\dots, s\}$ and $u\in V_{i}$. Then, $d(u,v)=1$ for every $v\in V(K_{n}^{N_{1},\dots, N_{s}})\setminus V_{i}$  and $d(u,v)=2$ for every $v\in V_{i}\setminus \{u\}$. \smallskip

Given a fixed  vertex $u\in V_{i}$,  let $x$ be the midpoint of the edge $[u,v]$ (then $v \notin V_{i}$). If   $w\in V_{i}$, then there   exists an edge joining $v$ with $w$.  Therefore, we have $d(x,w)\leq d(x,v)+d(v,w)=3/2$. If $w\notin V_{i}$, then $ [u,w]  \in E(K_{n}^{N_{1},\dots, N_{s}})$ and $d(x,w)\leq d(x,u)+d(u,w)=3/2$. Hence, $d(x,v)\leq 3/2$ for every $v\in V(K_{n}^{N_{1},\dots, N_{s}})$; thus, $d(x,y)\leq2$ for every
$y \in K_{n}^{N_{1},\dots, N_{s}}$.\smallskip

If $N_{1}+\dots+ N_{s} \leq n-2$, let $x$ be the midpoint of $[v^{1},v^{2}]$, where $v^{1},v^{2}\notin \cup _{i}V_{i}$. If $v\in V(K_{n}^{N_{1},\dots, N_{s}})$, then there exists an edge joining $v$ with $v^{1}$. Thus, we have  $d(x,v)\leq d(x,v^{1})+d(v^{1},v)=3/2$ for every $v\in V(K_{n}^{N_{1},\dots, N_{s}})$. Hence, $d(x,y)\leq2$ for every
$y \in K_{n}^{N_{1},\dots, N_{s}}$.
\smallskip

Therefore $\diam (K_{n}^{N_{1},\dots, N_{s}})=2$ and $  \delta(K_{n}^{N_{1},\dots, N_{s}} ) \leq 1 $ by Theorem \ref{t:diam-delta}.
\end{proof}

 In order to prove our next result we need the following Combinatorial lemma.

\begin{lem}\label{l:comb}
 For all $t\geq 3$, $(t\neq4,5)$, there exist numbers  $t_{i}\geq2$, $i=1,\dots, s$ $(s\geq 1)$, such that 
 $$ \Sigma_{i} t_{i} \leq t  \quad \text{and} \quad    \Sigma_{i} \displaystyle{t_{i} \choose 2} = t.$$
\end{lem}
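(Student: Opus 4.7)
The plan is to argue by a direct construction depending on where $t$ sits relative to the triangular numbers. Choose the unique integer $k \geq 3$ satisfying $\binom{k}{2} \leq t < \binom{k+1}{2}$ and set $r := t - \binom{k}{2}$, so $0 \leq r \leq k - 1$. The hypothesis $t \geq 3$ with $t \notin \{4,5\}$ forces the dichotomy: either $t = 3$ (in which case $k = 3$, $r = 0$, and one takes simply $t_1 = 3$), or $k \geq 4$.

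The strategy for $k \geq 4$ is to always use $t_1 := k$ as the main contribution $\binom{k}{2}$, and then fill the remaining amount $r$ using at most one copy of $\binom{3}{2} = 3$ together with several copies of $\binom{2}{2} = 1$. Concretely, if $r = 0$ I take only $t_1 = k$; if $r \in \{1,2\}$ I append $r$ extra parts equal to $2$; and if $3 \leq r \leq k-1$ I append one part equal to $3$ together with $r - 3$ parts equal to $2$. In each case $\sum_i \binom{t_i}{2} = t$ holds by construction, so the only thing left to check is the upper bound $\sum_i t_i \leq t$.

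That verification reduces to two elementary inequalities. In the middle case, $\sum_i t_i = k + 2r$, and the desired bound rearranges to $r \leq \binom{k}{2} - k = k(k-3)/2$; since $r \leq 2$ and $k \geq 4$, this holds. In the last case, $\sum_i t_i = k + 3 + 2(r-3) = k + 2r - 3$, and the desired bound rearranges to $r \leq \tfrac12(k^2 - 3k + 6)$; since $r \leq k-1$, it suffices to check $k^2 - 5k + 8 \geq 0$, and this quadratic has negative discriminant and is therefore positive for every $k$.

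The main (and in fact the only) obstacle is the boundary regime $k = 3$: there $r$ could be $1$ or $2$, but a quick inspection of the few possibilities (only $\binom{2}{2}=1$ and $\binom{3}{2}=3$ are $\leq t$) shows that no admissible decomposition exists, which is precisely why the statement of the lemma excludes $t = 4$ and $t = 5$.
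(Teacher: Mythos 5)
Your construction is correct, and it takes a genuinely different route from the paper. The paper proves the lemma by induction with step $3$: it exhibits explicit decompositions for the base cases $t=3,6,7,8,9$ and then, for $t>9$, takes a decomposition of $t-3$ and appends one extra part $t_{s+1}=3$, which adds $3$ to both $\Sigma_i t_i$ and $\Sigma_i \binom{t_i}{2}$. You instead give a direct, non-inductive construction: take the largest $k$ with $\binom{k}{2}\le t$, write $t=\binom{k}{2}+r$ with $0\le r\le k-1$, use one part $k$ and fill the remainder with at most one $3$ and copies of $2$, then verify $\Sigma_i t_i\le t$ via the inequalities $r\le k(k-3)/2$ and $r\le (k^2-3k+6)/2$ (the latter following from $k^2-5k+8>0$); the case split $t=3$ versus $k\ge 4$ correctly absorbs the excluded values $t=4,5$. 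The only unstated check is the subcase $r=0$, where $\Sigma_i t_i=k\le\binom{k}{2}=t$ holds trivially for $k\ge 3$ (and is in any case covered by your middle-case inequality with $r=0$). Comparing the two: the paper's induction is verified in one line per step but needs a list of base cases and yields a decomposition whose part-sum is close to $t$ (many parts equal to $3$); your greedy construction is self-contained, uses few parts, and actually achieves $\Sigma_i t_i$ of order $\sqrt{t}$, a stronger conclusion than required, though for the application (Corollary \ref{c:comb}, which only needs $\Sigma_i t_i\le t+2$) either version suffices.
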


\begin{proof}
If $t=3$, then choose $t_{1}=3$, $3\leq3$ and $\displaystyle{3 \choose 2} = 3$.\smallskip

If $t=6$, then  choose $t_{1}=4$, $4\leq6$ and $\displaystyle{4 \choose 2} = 6$.\smallskip

If $t=7$, then choose  $t_{1}=4$, $t_{2}=2$, $4+2\leq7$ and $\displaystyle{4 \choose 2}+ \displaystyle{2 \choose 2}= 7$.\smallskip

If $t=8$, then choose  $t_{1}=4$, $t_{2}=2$, $t_{3}=2$, $4+2+2\leq8$ and $\displaystyle{4 \choose 2} + \displaystyle{2 \choose 2}+\displaystyle{2 \choose 2}= 8$.\smallskip

If $t=9$, then choose  $t_{1}=4$, $t_{2}=3$, $4+3\leq9$ and $\displaystyle{4 \choose 2}+\displaystyle{3 \choose 2} = 9$.\smallskip

Let us prove the result by induction on $t$.\smallskip


We have seen that 

$$ \Sigma_{i} t_{i} \leq t,  \quad   \Sigma_{i} \displaystyle{t_{i} \choose 2} = t$$

holds for $6\leq t \leq9$. Assume now that it holds for every value $3,6,7,\dots,t-1$, with $t>9$. Then it holds for $t-3\geq6$ and there exist numbers  $t_{i}\geq2$, $i=1,\dots, s$,  such that $ \Sigma_{i} t_{i} \leq t-3$ and $ \Sigma_{i} \displaystyle{t_{i} \choose 2} = t-3$.\smallskip

Therefore, there exist numbers  $t_{i}'\geq2$, $t_{i}'= t_{i}$ for $i=1,\dots, s$, $t_{s+1}'=3$ such that
 
$$ \Sigma_{i} t_{i}'=\Sigma_{i} t_{i}+3 \leq t$$ 
and 
$$ \Sigma_{i} \displaystyle{t_{i}' \choose 2} =\Sigma_{i} \displaystyle{t_{i} \choose 2}+\displaystyle{3 \choose 2}= t.$$.\smallskip

So we have shown that the statement holds at $t$ when it is assumed to be true for $3,6,7,\dots ,t-1$.
\end{proof}

\begin{cor}\label{c:comb}
 For all $t\geq 1$, there exist numbers  $t_{i}\geq2$, $i=1,\dots, s$, $(s\geq 1)$ such that  $ \Sigma_{i} t_{i} \leq t+2$ and $ \Sigma_{i} \displaystyle{t_{i} \choose 2} = t$. 
\end{cor}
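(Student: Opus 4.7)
The plan is to derive the corollary directly from Lemma \ref{l:comb} by case analysis, handling the four values of $t$ not covered by that lemma by hand.

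First, I would observe that Lemma \ref{l:comb} already produces numbers $t_i \geq 2$ with $\sum_i t_i \leq t \leq t+2$ and $\sum_i \binom{t_i}{2} = t$ whenever $t \geq 3$ and $t \notin \{4,5\}$. Thus the corollary is immediate in that range, and it only remains to exhibit explicit decompositions for $t \in \{1,2,4,5\}$.

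For the four exceptional values I would just display witnesses: take $s=1$, $t_1=2$ when $t=1$ (then $\sum t_i = 2 \leq 3$ and $\binom{2}{2}=1$); take $s=2$, $t_1=t_2=2$ when $t=2$ (then $\sum t_i = 4 \leq 4$ and $2\binom{2}{2}=2$); take $s=2$, $t_1=3$, $t_2=2$ when $t=4$ (then $\sum t_i=5\leq 6$ and $\binom{3}{2}+\binom{2}{2}=4$); and take $s=3$, $t_1=3$, $t_2=t_3=2$ when $t=5$ (then $\sum t_i=7\leq 7$ and $\binom{3}{2}+2\binom{2}{2}=5$). Each verification is a single arithmetic check, and the relaxation from $\sum t_i \leq t$ in the lemma to $\sum t_i \leq t+2$ in the corollary is exactly what makes the cases $t=2,4,5$ fit (for $t=1$ the bound $\sum t_i\leq t+2$ is needed because one cannot use any $t_i\geq 2$ otherwise).

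There is no real obstacle here; this is a bookkeeping lemma whose only purpose is to cover the small-$t$ gap in Lemma \ref{l:comb} at the modest cost of allowing $\sum_i t_i$ to exceed $t$ by up to $2$. The only thing to be slightly careful about is checking that the constructed witnesses really respect $t_i \geq 2$ and $s\geq 1$, which they do by inspection.
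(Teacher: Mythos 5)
Your proposal is correct and follows essentially the same route as the paper: invoke Lemma \ref{l:comb} for $t\geq 3$ with $t\neq 4,5$, and exhibit the same explicit witnesses for $t\in\{1,2,4,5\}$ (namely $\{2\}$, $\{2,2\}$, $\{3,2\}$, $\{3,2,2\}$). Nothing further is needed.
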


\begin{proof}

If $t\neq 1,2,4,5$, then Lemma \ref{l:comb} gives the result.\smallskip

If $t=1$, then choose $t_{1}=2$, $2\leq3$ and $\displaystyle{2 \choose 2} = 1$.\smallskip

If $t=2$, then  choose $t_{1}=2$, $t_{2}=2$, $2+2\leq4$ and $\displaystyle{2 \choose 2} +\displaystyle{2 \choose 2} = 2$.\smallskip

If $t=4$, then choose  $t_{1}=3$, $t_{2}=2$, $3+2\leq6$ and $\displaystyle{3 \choose 2}+ \displaystyle{2 \choose 2}= 4$.\smallskip

If $t=5$, then choose  $t_{1}=3$, $t_{2}=2$, $t_{3}=2$,  $3+2+2\leq7$ and $\displaystyle{3 \choose 2} + \displaystyle{2 \choose 2}+\displaystyle{2 \choose 2}= 5$.\smallskip

\end{proof}


\begin{prop}\label{p:bounds}
 If $m\geq n$ and $2m> 3n-3$, then $ A(n,m)=1$.
\end{prop}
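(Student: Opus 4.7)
The plan is to prove $A(n,m) = 1$ by establishing the two matching inequalities $A(n,m) \geq 1$ and $A(n,m) \leq 1$ separately.

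For the lower bound, take an arbitrary $G \in \mathcal{G}(n,m)$. The contrapositive of Proposition \ref{p:c.triangles} says that the assumption $2m > 3n-3$ rules out the case where every cycle of $G$ has length $3$, so $G$ must contain some cycle of length at least $4$. By Theorem \ref{t:delta2}, $\delta(G) < 1$ would force every cycle of $G$ to have length at most $3$, a contradiction. Hence $\delta(G) \geq 1$ for every $G \in \mathcal{G}(n,m)$, and therefore $A(n,m) \geq 1$.

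For the upper bound, it suffices to exhibit one graph $G \in \mathcal{G}(n,m)$ with $\delta(G) \leq 1$. I would force $\diam G \leq 2$, since then Theorem \ref{t:diam-delta} gives $\delta(G) \leq 1$ directly. Take $G$ to be the star $K_{1,n-1}$ with center $c$ and leaves $v_{1},\dots,v_{n-1}$, augmented by $m-(n-1)$ additional edges chosen arbitrarily among distinct pairs of leaves. The bounds $n \leq m \leq \binom{n}{2}$ give $1 \leq m-(n-1) \leq \binom{n-1}{2}$, so the required number of leaf-leaf edges can indeed be chosen; the result lies in $\mathcal{G}(n,m)$. Any two vertices of $G$ are joined by a path of length at most two through $c$, so $\diam G \leq 2$ and $\delta(G) \leq 1$. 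Combining the two bounds yields $A(n,m) = 1$.

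The main conceptual obstacle is the upper bound. A natural attempt would be to realise $G$ as a member of the family $K_n^{N_1,\dots,N_s}$ of Definition \ref{def:c.Kn}, which satisfies $\delta \leq 1$ by Lemma \ref{l:c.Kn1}, and then match the edge count using Corollary \ref{c:comb} applied to $t := \binom{n}{2} - m$. However, Corollary \ref{c:comb} only guarantees $\sum_i N_i \leq t + 2$, and this can exceed the vertex budget $\sum_i N_i \leq n$ in parts of the allowed range; for instance $n = 10$, $m = 14$ gives $t = 31$, and one can check that no admissible decomposition fits into $10$ vertices. The star-plus-extra-edges construction above sidesteps this combinatorial difficulty and works uniformly for every valid $(n,m)$ satisfying the hypothesis.
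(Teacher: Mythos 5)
Your lower bound is exactly the paper's argument and is correct. The upper bound, however, has a genuine gap, in fact two related ones. First, Theorem \ref{t:diam-delta} involves the diameter of $G$ as a metric space (all points, including interior points of edges), not $\diam V(G)$; this is precisely why the paper's proof of Lemma \ref{l:c.Kn1} checks midpoints of edges before concluding $\diam(K_n^{N_1,\dots,N_s})=2$. In your star-plus-leaf-edges graph one only gets $\diam V(G)\le 2$, while $\diam G$ can equal $3$: if $[v_1,v_2]$ and $[v_3,v_4]$ are two added leaf--leaf edges with no edges between $\{v_1,v_2\}$ and $\{v_3,v_4\}$ other than through the center, the midpoints of these two edges are at distance $3$. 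So Theorem \ref{t:diam-delta} only yields $\delta(G)\le 3/2$. Second, the defect is not merely in the bound you quoted: with the extra edges ``chosen arbitrarily'' the conclusion $\delta(G)\le 1$ is simply false. Take $n=10$, $m=18$ (so $m\ge n$ and $2m=36>27=3n-3$) and let the $m-(n-1)=9$ extra edges form the cycle $v_1v_2\cdots v_9v_1$ on the leaves, i.e. $G$ is the wheel $K_1+C_9$. Let $x,y,z$ be the midpoints of $[v_1,v_2]$, $[v_4,v_5]$, $[v_7,v_8]$; then $d(x,y)=d(y,z)=d(z,x)=3$ and the three rim arcs joining them are geodesics, so the rim $C_9$ is a geodesic triangle. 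Its point $v_3$ (midpoint of the side $[xy]$) satisfies $d(v_3,[yz]\cup[zx])=3/2$, since the nearest points of the other two sides are $x$ and $y$ themselves. Hence $\delta(G)\ge 3/2>1$, and your construction does not produce a graph realizing $A(n,m)\le 1$ for this admissible pair $(n,m)$.

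You did correctly spot the vertex-budget obstruction to applying Corollary \ref{c:comb} with $t=\binom{n}{2}-m$ inside all of $K_n$; the paper's way around it is not to abandon the diameter-$2$ construction but to shrink it: choose $n_0\le n$ maximal with $m-\binom{n_0-1}{2}\ge n-n_0+1$, set $T=\binom{n_0}{2}+n-n_0-m$ (which satisfies $1\le T\le n_0-2$), apply Corollary \ref{c:comb} to $T$ so that $\sum_i t_i\le T+2\le n_0$, realize the graph $K_{n_0}^{t_1,\dots,t_s}$ (with $\delta\le 1$ by Lemma \ref{l:c.Kn1}), and attach a path with $n-n_0+1$ vertices at a single vertex; by the T-decomposition Lemma \ref{l:bermu} the path does not increase $\delta$. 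Some such careful choice of which edges to add is unavoidable, because, as the wheel shows, an arbitrary graph of the form $K_1+H$ need not have hyperbolicity constant $\le 1$.
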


\begin{proof}

Consider any $G\in  \mathcal{G}(n,m)$. Proposition \ref{p:c.triangles} gives that there exists at least one cycle in $G$ with length greater or equal than $4$. Then Theorem \ref{t:delta2} gives $\delta(G)\geq1$ for every $G\in  \mathcal{G}(n,m)$ and, consequently, $A(n,m)\geq1$.\smallskip

In order to finish the proof it suffices to find a graph $G\in  \mathcal{G}(n,m)$ with $\delta(G)\leq1$.\smallskip

Note that $n\geq4$ since $2m> 3n-3$.\smallskip

If $m=n+1$, then consider a graph $G_{1}$ with 4 vertices and 5 edges and a path graph $G_{2}$ with $n-3$ vertices and $n-4$ edges. Fix vertices $v_{1}\in G_{1}$ and $v_{2}\in G_{2}$. Let $G$ be the graph obtained by identifying $v_{1}$ and $v_{2}$ in a single vertex, then $G$ has $n$ vertices and $m=n+1$ edges, and $\delta(G)=\delta(G_{1})=1$. Therefore $A(n,m)\leq \delta(G) \leq 1$ and we conlude $A(n,m)=1$.\smallskip

If $m=\displaystyle{n\choose 2}$ and $G\in  \mathcal{G}(n,m)$, the $G$ is isomorphic to $K_{n}$ and $\delta(G)=1$. Therefore  $A(n,m)=1$.\smallskip

Assume now that $ n+2 \leq m< \displaystyle{n\choose 2}$. Then $m-6\geq n-4$ and we can define
$$n_{0}-1:=\max \Big  \{4\leq j \leq n-1 \mid m-\displaystyle{j\choose 2}\geq n-j \Big  \}.$$

Then $3\leq n_{0} \leq n$ and we have

$$\displaystyle{n_{0}-1\choose 2}+n-n_{0}+1\leq m < \displaystyle{n_{0}\choose 2}+n-n_{0}.  $$

Define $T:=\displaystyle{n_{0}\choose 2}+n-n_{0}-m$. Notice that 

$$1\leq T \leq \displaystyle{n_{0}\choose 2}+n-n_{0}-\displaystyle{n_{0}-1\choose 2}-n+n_{0}-1=n_{0}-2.$$

It follows from  Corollary \ref{c:comb} that there exist numbers  $t_{i}\geq2$, $i=1,\dots, s$, such that  $ \Sigma_{i} t_{i} \leq T+2 \leq n_{0}$ and $ \Sigma_{i} \displaystyle{t_{i} \choose 2} = T$. \smallskip

Choose sets of vertices $V_{1}, \dots, V_{s} \subset V(K_{n_{0}})$ with $V_{i} \cap  V_{j}= \emptyset$ if  $i\neq j$ and $\#V_{i}=t_{i}$ for $i=1,\dots, s$. Let us denote by $G_{1}$ the graph obtained from $K_{n_{0}}$ by removing the $T=\Sigma_{i} \displaystyle{t_{i} \choose 2}$ edges joining any two vertices in $V_{i}$ for every $i=1,\dots, s$. Then $G_{1}\in  \mathcal{G}(n_{0},m-n+n_{0})$ and Lemma \ref{l:c.Kn1} implies $\delta(G_{1})=\delta(K_{n_{0}}^{t_{1},\dots, t_{s}}) \leq1$. \smallskip

 Let us define $G_{2}$ as a path graph with $n-n_{0}+1$ vertices and $n-n_{0}$ edges. Fix vertices $v_{1}\in G_{1}$ and $v_{2}\in G_{2}$. Let $G$ be the graph obtained from $G_{1}$ and $G_{2}$ by identifying $v_{1}$ and $v_{2}$ in a single vertex, then $G\in \mathcal{G}(n,m)$ and $\delta(G)=\delta(G_{1})=1$. Therefore $A(n,m)\leq \delta(G) = 1$ and we conclude $A(n,m)=1$.
 
 \end{proof}
 
 The previous results have the following consequence.

 \begin{thm}\label{bound-final}
   If $m=n-1$, then $A(n,m)=0$. \\
    If $m\geq n$ and $2m\leq 3n-3$, then $A(n,m)=3/4$. \\
   If $m\geq n$ and $2m> 3n-3$, then $ A(n,m)=1$.

 \end{thm}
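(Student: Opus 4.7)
The plan is to observe that the three stated conditions exhaust all admissible pairs $(n,m)$ with $n-1 \leq m \leq \binom{n}{2}$, and that each case has already been resolved by an earlier result. The theorem is therefore essentially a bookkeeping statement consolidating Propositions \ref{precise-val} and \ref{p:bounds} together with the elementary tree case.

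First, for $m = n-1$ every graph in $\mathcal{G}(n,m)$ is a connected graph on $n$ vertices with exactly $n-1$ edges, hence a tree, hence $0$-hyperbolic by Theorem \ref{t:delta2}; this yields $A(n, n-1) = 0$ (and was already observed in the introduction). Second, the regime $m \geq n$ with $2m \leq 3n-3$ is exactly the hypothesis of Proposition \ref{precise-val}, which gives $A(n,m) = 3/4$. Third, the regime $m \geq n$ with $2m > 3n - 3$ is exactly the hypothesis of Proposition \ref{p:bounds}, which gives $A(n,m) = 1$. Since the three cases are mutually exclusive and jointly exhaustive over the admissible range, the theorem follows by concatenation.

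There is essentially no obstacle at this stage; all the real work has been done earlier. The substantive content sits in the two propositions invoked: the value $3/4$ is realized by a bouquet of $k$ triangles together with $n_0$ pendant edges identified at a common vertex (with the lower bound coming from Theorem \ref{t:delta2}, since $G$ cannot be a tree), while the value $1$ is realized by grafting a pendant path to a graph of the form $K_{n_0}^{t_1, \ldots, t_s}$, whose hyperbolicity constant is at most $1$ by Lemma \ref{l:c.Kn1}, whose edge count is tuned using the combinatorial Corollary \ref{c:comb}, and whose lower bound $\delta(G) \geq 1$ is forced by the presence of a cycle of length at least $4$ (guaranteed by Proposition \ref{p:c.triangles} under the assumption $2m > 3n-3$) combined with Theorem \ref{t:delta2}.
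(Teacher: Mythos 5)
Your proposal is correct and matches the paper exactly: the paper gives no separate argument for this theorem, simply stating it as a direct consequence of the tree case ($m=n-1$, where every graph in $\mathcal{G}(n,m)$ is a tree with $\delta=0$) together with Propositions \ref{precise-val} and \ref{p:bounds}, which is precisely your consolidation. Your summary of where the real work lies (the extremal constructions and the lower bounds via Theorem \ref{t:delta2} and Proposition \ref{p:c.triangles}) accurately reflects the content of those earlier proofs.
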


\section{Random graphs}
The field of random graphs was started in the late fifties and early sixties of the last
century by Erd\"os and R\'{e}nyi, see \cite{{127},{128}, {129},{130}}. 
At first, the study of random graphs was used to prove deterministic properties of graphs. For example, if we can show that a random graph has a certain property with a positive probability, then a graph must exist with this property. Lately there has been a great amount of work on the field. The practical applications of random graphs are found, for instance, in areas in which complex networks need to be modeled. See the standard references on the subject \cite{60} and \cite{jjj} for the state of the art. \smallskip

Erd\"os and R\'{e}nyi
studied  in \cite{128}    the simplest imaginable random graph, which is now named after them. 
Given  $n$ fixed vertices,  the Erd\"os-R\'{e}nyi random graph $R(n,m)$ is characterized by $m$ edges  distributed uniformly at random among all possible  $\displaystyle{n\choose 2}$  edges. However,  in order to avoid disconnected graphs, which are not geodesic metric spaces, a random tree of order $n$ is first generated and then the remaining $m-(n-1)$ edges are distributed uniformly at random over the remaining  $\displaystyle{n\choose 2}-n+1$  possible edges. Call this new model $R'(n,m)$. This modified Erd\"os-R\'{e}nyi random graph   $R'(n,m)$  has a number of desirable properties as a model of a network, see \cite{Jonck}. \smallskip

We can apply the results obtained in this work to $R'(n,m)$: \smallskip

For all $G\in R'(n,m)$ we have $A(n,m)\leq  \delta(G)  \leq B(n,m)$, and Theorems \ref{bound-final}  and  \ref{t:bound2} give the precise value for $A(n,m)$ and an upper bound of $B(n,m)$.



\end{document}